\documentclass[a4paper,11pt,english]{amsart}
\usepackage[english]{babel}
\usepackage[utf8]{inputenc}
\usepackage[T1]{fontenc}
\usepackage{graphicx}
\usepackage[bookmarks=true]{hyperref}
\usepackage{amsmath,mathrsfs,url,amsfonts}
\usepackage{amssymb,enumitem,fancyhdr}
\usepackage{latexsym}
\usepackage{amsthm}
\usepackage[all]{xy}
\usepackage{color}
\usepackage[a4paper,top=3cm, bottom=3cm, left=3cm, right=3cm]{geometry}
\usepackage{algorithm}
\usepackage{algorithmicx}
\usepackage{algpseudocode}
\usepackage{epstopdf}
\usepackage{bm}
\usepackage{hhline}
\usepackage{multirow}
\usepackage{pifont}
\theoremstyle{definition}
\newtheorem{defin}{Definition}[section]

\theoremstyle{definition}
\newtheorem{example}[defin]{Example}
\newtheorem{oss}[defin]{Remark}
\theoremstyle{plain}
\newtheorem{teor}[defin]{Theorem}
\newtheorem{lemma}[defin]{Lemma}
\newtheorem{coroll}[defin]{Corollary}
\newtheorem{pro}[defin]{Proposition}
\DeclareMathOperator{\Fe}{Ker}
\DeclareMathOperator{\im}{Im}

\DeclareMathOperator{\chara}{char}

\DeclareMathOperator{\supp}{supp}
\DeclareMathOperator{\link}{link}

\DeclareMathOperator{\Tor}{Tor}

{\left\lbrace\begin{array}{@{}l@{}}}%
{\end{array}\right.}

\newcommand{\bn}{ \beta }

\newcommand{\de}{ \partial }

\def \N {{\mathbb N}}
\def \Z {{\mathbb Z}}

\def \S {{\mathbb S}}

\def \K {{\Bbbk}}

\def \RP {{\mathbb{RP}}}
\usepackage{breakcites}

\pdfoptionpdfminorversion=5

\begin{document}
\title{Betti splitting from a topological point of view}
\author{Davide Bolognini, Ulderico Fugacci}
%\date{\vspace{-5ex}}

\begin{abstract}
A Betti splitting $I=J+K$ of a monomial ideal $I$ ensures the recovery of the graded Betti numbers of $I$ starting from those of $J,K$ and $J \cap K$. In this paper, we introduce this condition for simplicial complexes, and, by using Alexander duality, we prove that it is equivalent to a recursive splitting conditions on links of some vertices. The adopted point of view enables for relating the existence of a Betti splitting for a simplicial complex $\Delta$ to the topological properties of $\Delta$. Among other results, we prove that orientability for a manifold without boundary is equivalent to admit a Betti splitting induced by the removal of a single facet. Taking advantage of this topological approach, we provide the first example in literature admitting Betti splitting but with characteristic-dependent resolution. Moreover, we introduce the notion of splitting probability, useful to deal with results concerning existence of Betti splitting.
\end{abstract}

\maketitle
\section{Introduction}\label{sec:intro}
A fundamental tool to describe the structure of a homogeneous ideal $I$ in a polynomial ring is the minimal graded free resolution of $I$ and, in particular, its graded Betti numbers $\beta_{i,j}(I)$. Dealing with ideals of large size, the retrieval of these algebraic invariants can be hard from a computational point of view, also in the case of monomial ideals. A common strategy to obtain the information on $I$ is to decompose it into smaller ideals, in order to recover the invariants of $I$ using the invariants of its pieces. Following this idea, originally introduced in \cite{eiker} and developed in \cite{ur}, a Betti splitting of a monomial ideal $I$ consists of a suitable decomposition $I=J+K$ of $I$ ensuring the complete retrieval of the graded Betti numbers of $I$ from the ones of $J$, $K$ and $J\cap K$. The decomposition $I=J + K$ is called a {\em Betti splitting} of $I$ if
\begin{align*}\label{eq:Betti}
\beta_{i,j}(I)=\beta_{i,j}(J)+\beta_{i,j}(K)+\beta_{i-1,j}(J \cap K), \text{ for all } i, j \in \mathbb{N}.
\end{align*}
In this paper, we introduce this condition from a combinatorial point of view, giving a new perspective on the topic, framing it in topological terms. 

%Combinatorial structures arise in several concrete applications (for instance computational topology, data analysis and shape recognition). Our aim is to give a strong computational support for researches focused on Betti splitting: we implemented several algorithms in Macaulay2 \cite{macaulay} and Python to check, for instance, if a given decomposition of a simplicial complex is a Betti splitting, see \cite{ico}. We think that this can be extremely useful for further developments on this subject.

In several applications, it is interesting to describe the topology of a geometric realization of a simplicial complex $\Delta$, in particular its homology. The remarkable Hochster's formula relates the graded Betti numbers of the Alexander dual ideal $I_{\Delta}^*$ of $\Delta$ and the reduced homology of suitable subcomplexes of $\Delta$; in this way Betti splittings are related to the classical Mayer-Vietoris approach. 

Reading the Betti splitting condition for $I_{\Delta}^*$ from a purely topological point of view, we relate topological properties and features of a geometric realization of $\Delta$ and existence of suitable Betti splittings for $I_{\Delta}^*$. In Example \ref{len}, we present the first example in literature of an ideal with characteristic-dependent resolution admitting a Betti splitting over {\em every} field, pointing out that a topological approach is a natural way to deal with such kind of problems. This answers to \cite[Question 4.3]{ur}.

In this framework, we introduce the notion of {\em homology splitting} for a simplicial complex $\Delta$, see Definition \ref{def:homspli}. It corresponds to a decomposition of $I_{\Delta}^*$ for which the previous equation on graded Betti numbers holds for $j=n$ and for every $i \in \mathbb{N}$. Using this notion, we are able to give a complete characterization of Betti splitting for simplicial complexes, pointing out also the intrinsic recursive nature of this tool, see Theorem \ref{teor:splitforcomplex}. %Algorithm \ref{algorithm2} checks if a given decomposition of $\Delta$ is a Betti splitting, taking advantage of the characterization given in this result. Algorithm \ref{algorithm1} checks if a given decomposition of $\Delta$ is a homology splitting. If there is at least an essential facet in $\Delta$, then a suitable homology splitting is ensured (Theorem \ref{teo:1}) and the existence of essential facets of dimension equal to $\dim(\Delta)$ is related to the non-vanishing of top homology (Theorem \ref{pro:essential}). 

Inspired by a tecnique introduced in \cite{ur2}, we study homology splittings induced by the removal of a single monomial of $I_{\Delta}^*$: it corresponds to remove a single facet from a simplicial complex $\Delta$. Starting from this, we introduce the notion of {\em essential facet} of a simplicial complex, see Definition \ref{def:essential}. Using this tool, we prove that there is a relation between the non-vanishing of top-homology and the existence of a Betti splitting (Theorem \ref{teo:1}). 

In the relevant case of simplicial manifolds without boundary, the homology splitting condition is equivalent to the Betti splitting condition, restricting to decompositions for which the intersection of the pieces is a manifold as well (Theorem \ref{teo:3}). This yields to the existence of Betti splittings for orientable simplicial manifolds (Proposition \ref{pro:essMan}) and for {\em all} simplicial manifolds if $\chara(\K)=2$ (Proposition \ref{pro:essManZ2}). Moreover we are able to characterize orientability of a manifold in terms of Betti splittings induced by the removal of a single facet (Theorem \ref{viceversa}).

In Section \ref{sec:examples}, we consider pathological simplicial complexes that do not admit Betti splitting, depending on the field. We provide obstructions to such kind of decomposition (Proposition \ref{pro:necessary}). From this result, we can test algorithmically these obstructions, at least in dimension two, see \cite{ico}.

All our result seems independent of the chosen triangulation of the considered spaces. To formalize this, we introduce the notion of {\em best Betti splitting probability}, see Definition \ref{bettiprob}. Several results of the paper can be given in terms of this notion and it can be the starting point to deal with some new problems.

\section{Preliminaries}\label{sec:back}
Let $\K$ be a field, $R=\K[x_1,\dots, x_n]$ be the polynomial ring on $n$ variables with coefficients in $\K$ and $\mathfrak{m} = (x_1, \dots, x_n)$ its maximal homogeneous ideal. Let $M$ be a finitely generated graded $R$-module. The {\em minimal graded free resolution} of $M$ as $R$-module is a free resolution of $M$ of the form
$$  0 \rightarrow  F_{p} \xrightarrow{\phi_{p}}  \cdots \xrightarrow{\phi_{2}}  F_{1} \xrightarrow{\phi_{1}}  F_{0} \xrightarrow{\phi_{0}}  M \rightarrow  0 $$ where $F_i = \bigoplus_{j\in \N} R(-j)^{\bn_{i,j}(M)}$, $\phi_i$ are homogeneous maps and $\im(\phi_i) \subseteq \mathfrak{m} F_{i-1}$ for each integer $i$. The invariants $\bn_{i,j}(M) = \dim_\K(F_i)_j =\dim_{\K} \Tor_i(M,\K)_j$ are the {\em graded Betti numbers} of $M$. Denote by $\bn_{i}(M)=\sum_{j\in \N} \bn_{i,j}(M)$ the {\em $i^{th}$ total Betti number} of $M$.

If $M$ is graded over $\mathbb{Z}^n$, we can consider its multigraded resolution and its {\em multigraded Betti numbers} $\beta_{i,\underline{a}}(M)$, where $\underline{a} \in \mathbb{Z}^n$. Typical examples of multigraded modules are given by monomial ideals. Denote by $\supp({\underline{a}})$ the set $\{i:a_i \neq 0\}$ and by $|\underline{a}|:=\sum_{i=1}^n a_i$, for every $\underline{a} \in \mathbb{Z}^n$.

Given a monomial ideal $I \subseteq R$, we denote by $G(I)$ the minimal system of monomial generators of $I$.

Betti splitting tecnique is a Mayer-Vietoris approach for the study of graded Betti numbers of monomial ideals. It appeared for the first time in \cite{eiker}, but it has been formalized in full generality only in \cite{ur}.

\begin{defin} \cite[Definition 1.1]{ur} \label{def:Ibettispli} Let $I$, $J$ and $K$ be monomial ideals in $R$ such that $I = J + K$ and $G(I)$ is the disjoint union of $G(J)$ and $G(K)$. We say that $J + K$ is a {\em Betti splitting} of $I$ over $\K$ if
$$\bn_{i,j}(I)=\bn_{i,j}(J)+\bn_{i,j}(K)+\bn_{i-1,j}(J \cap K), \text{ for all } i, j \in \N.$$
\end{defin}

This condition is equivalent to the vanishing of some maps between Tor modules, see \cite[Proposition 2.1]{ur}. In particular, the induced maps $$\Tor_i(J \cap K;\K)_j \rightarrow \Tor_i(J;\K)_j \oplus \Tor_i(K;\K)_j$$ must be zero, for all $i,j \in \N$.

Notice that these maps are zero if and only if the corresponding maps $$\Tor_i(J \cap K;\K)_{\underline{a}} \rightarrow \Tor_i(J;\K)_{\underline{a}} \oplus \Tor_i(K;\K)_{\underline{a}}$$ on multidegrees $\underline{a}$ such that $|\underline{a}|=j$ are zero. 

From this, it follows that the Betti splitting formula for degrees holds if and only if the Betti splitting formula for multidegrees holds.  

\begin{example}\label{tostart}
Let $I=(x_4x_5,x_1x_5,x_1x_3,x_1x_2) \subseteq \K[x_1,\cdots,x_5]$. Consider the decomposition $I=J+K$, with $J=(x_4x_5,x_1x_3)$ and $K=(x_1x_5,x_1x_2)$. It is not difficult to see that $\beta_{1,4}(J) \neq 0$, since $J$ is a complete intersection. But $\beta_{1,4}(I)=0$, since $I$ is an ideal with linear resolution. Then, the given decomposition is not a Betti splitting. Instead, for instance, $I=(x_4x_5,x_1x_5)+(x_1x_3,x_1x_2)$ is a Betti splitting of $I$.
\end{example}

In this paper, we introduce Betti splitting for simplicial complexes, inspired by the corresponding notion for ideals associated to them. Our purpose is to point out that a topological approach to Betti splitting problems yields to interesting results and examples. An {\em abstract simplicial complex} $\Delta$ on $n$ {\em vertices} is a collection of subsets of $[n]=\{1, \dots ,n\}$, called {\em faces}, such that if $F \in \Delta$, $G \subseteq F$, then $G \in \Delta$. A simplicial complex $\Delta$ is completely determined by the collection $\mathcal{F}(\Delta)$ of its {\em facets}, the maximal faces with respect to inclusion. Define the {\em dimension} of a facets $F$ as $\mathrm{dim}(F):=|F|-1$ and $\dim(\Delta)$ as $$\dim(\Delta):=\mathrm{max} \{\mathrm{dim}(F):F \in \mathcal{F}(\Delta\}.$$ Denote by $\langle F_1, \dots, F_r \rangle$ the simplicial complex determined by the facets $F_1, \dots, F_r$. Every abstract simplicial complex can be seen as a topological space $|\Delta|$.  

%A {\em simplex of dimension $k$} or \textit{$k$-simplex} is the convex hull of $k+1$ points in general position in $\R^n$. Every abstract simplicial complex can be seen as a topological space $|\Delta|$: in fact the facets of $\Delta$ can be realized as simplices in a suitable Euclidean real space such that two simplices must intersect in a common subface (possibly the empty face). 

Let $k \in \mathbb{N}$. Denote by $\widetilde{H}_k(\Delta;\K)$ the $k^{th}$ simplicial reduced homology group of $\Delta$ with coefficients in $\K$. It can be proved that $\widetilde{H}_k(\Delta;\K) \simeq \widetilde{H}_k(|\Delta|;\K)$, see \cite{hatcher}.  Denote by $$\widetilde{\bn}_k(\Delta;\K):=\mathrm{dim}_{\K} \widetilde{H}_k(\Delta;\K)$$ and recall that $\widetilde{\beta}_{-1}(\Delta;\K) \neq 0$ if and only if $\Delta=\{\emptyset\}$; in this case $\widetilde{\beta}_{-1}(\Delta;\K)=1$.

There are several ways to associate a squarefree monomial ideal to a simplicial complex $\Delta$. In the literature, the most studied ideal is the so-called {\em Stanley-Reisner ideal}. In this paper, we are interested in the Alexander dual ideal of $\Delta$, that can be viewed as the Stanley-Reisner ideal of a dual simplicial complex $\Delta^*$ (see for instance \cite{hibi}).

\begin{defin}\label{def:alexi}
Let $\Delta$ be a simplicial complex on $[n]$. The {\em Alexander dual ideal} of $\Delta$ is the squarefree monomial ideal defined by $$I_{\Delta}^*=(x_{[n] \setminus F}:F \in \mathcal{F}(\Delta)) \subseteq R=\K[x_1,\cdots,x_n],$$ where $x_{[n] \setminus F}=\prod_{i \in [n] \setminus F}x_i$.
\end{defin}

\begin{oss}
Given a squarefree monomial ideal $I \subseteq R$ and fixing the number of variables of $R$, there is a {\em unique} simplicial complex $\Delta$ such that $I=I_{\Delta}^*$. {\em For the rest of the paper, complements of facets are computed with respect to {\em all} the $n$ variables of $\K[x_1,\cdots,x_n]$}.
\end{oss}

In \cite{hochster77}, M. Hochster proved a remarkable formula: it gives a relation between the homology of suitable subcomplexes of a simplicial complex and the graded Betti numbers of its associated ideals. The version of Hochster's formula that we recall here is due to Eagon and Reiner \cite{eagon98}.

Given a simplicial complex $\Delta$, we define the {\em link} of a face $F$ in $\Delta$ as $$\link_{\Delta}F:=\{G \in \Delta:F \cup G \in \Delta, F \cap G=\emptyset\}.$$ Notice that $\link_\Delta \emptyset=\Delta$.

\begin{teor}{\em (Hochster's formula)}.\label{thm:Hoch}  Let $\Delta$ be a simplicial complex on $[n]$ and $\underline{a} \in \mathbb{Z}^n$. Then $\bn_{i,\underline{a}}(I_{\Delta}^*) \neq 0$ if and only if $\supp(\underline{a})=[n] \setminus G$, for some $G \in \Delta$. 

In this case $\bn_{i,\underline{a}}(I^*_\Delta) = \widetilde{\bn}_{i-1}(\link_\Delta G; \K)$. In particular, $$\bn_{i,j}(I^*_\Delta) = \sum_{G \in \Delta, |G|=n-j} \widetilde{\bn}_{i-1}(\link_\Delta G; \K).$$
\end{teor}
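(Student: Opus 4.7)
The plan is to compute the multigraded $\Tor$ of $I_{\Delta}^*$ against $\K$ by resolving $\K$ with the Koszul complex $K_\bullet$ on $x_1,\dots,x_n$ and reading off each multidegree strand combinatorially. Throughout, I would exploit the well-known fact that, for a squarefree monomial ideal, every nonvanishing multigraded Betti number sits in a squarefree multidegree; this lets me parametrize the relevant multidegrees by subsets $W\subseteq [n]$, and it is convenient to set $G:=[n]\setminus W$ so that the claim becomes $\beta_{i,\underline{a}}(I_{\Delta}^*)=\widetilde{\beta}_{i-1}(\link_{\Delta} G;\K)$ whenever $G\in\Delta$, and zero otherwise.

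Next, I would unpack the multidegree-$\underline{a}$ strand of $I_{\Delta}^*\otimes_R K_\bullet$. A basis element in homological degree $i$ has the form $x^{W\setminus S}\otimes e_S$ with $|S|=i$, $S\subseteq W$, and the monomial $x^{W\setminus S}$ lying in $I_{\Delta}^*$; the last condition translates into the existence of a facet $F\in\mathcal{F}(\Delta)$ with $G\cup S\subseteq F$, i.e., into $G\cup S\in\Delta$, which since $S\cap G=\emptyset$ is exactly $S\in\link_{\Delta} G$. Thus the strand is spanned by the $(i-1)$-faces of $\link_{\Delta} G$, and a direct computation with the Koszul differential shows that its restriction to this strand coincides, up to the usual signs, with the reduced simplicial boundary of $\link_{\Delta} G$. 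Consequently the strand is isomorphic to the shifted reduced chain complex $\widetilde{C}_{\bullet-1}(\link_{\Delta} G;\K)$.

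Taking homology then yields $\Tor_i(I_{\Delta}^*,\K)_{\underline{a}}\cong \widetilde{H}_{i-1}(\link_{\Delta} G;\K)$, and hence $\beta_{i,\underline{a}}(I_{\Delta}^*)=\widetilde{\beta}_{i-1}(\link_{\Delta} G;\K)$. If $G\notin\Delta$ then the strand is already zero at the chain level, since no facet of $\Delta$ can contain $G$ and therefore no generator of $I_{\Delta}^*$ divides $x^W$; this disposes of the vanishing half of the first statement. The final identity on total degrees is obtained by summing $\beta_{i,\underline{a}}$ over squarefree multidegrees $\underline{a}$ with $|\underline{a}|=j$, which correspond bijectively to faces $G\in\Delta$ with $|G|=n-j$ via $G=[n]\setminus\supp(\underline{a})$.

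The main obstacle will be the careful bookkeeping in the second step: matching the signs of the Koszul differential to those of the simplicial boundary, and verifying that the identification $x^{W\setminus S}\otimes e_S\mapsto S$ is compatible with the conventional orientations on simplices. The remaining steps are formal manipulations together with standard facts about multigraded resolutions of squarefree monomial ideals.
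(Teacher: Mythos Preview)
The paper does not supply its own proof of this theorem: Hochster's formula is stated as background and attributed to Eagon and Reiner \cite{eagon98}, with no argument given. So there is no in-paper proof to compare against.

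On its own merits, your argument is the standard and correct one. The identification of the multidegree-$\underline{a}$ strand of $I_{\Delta}^*\otimes_R K_\bullet$ with $\widetilde{C}_{\bullet-1}(\link_\Delta G;\K)$ is exactly right: the key computation $x^{W\setminus S}\in I_\Delta^*\iff G\cup S\in\Delta\iff S\in\link_\Delta G$ is what makes the whole thing work, and you have it. The Koszul differential indeed lands back in the same strand because $I_\Delta^*$ is an ideal (so $x_j\cdot x^{W\setminus S}\in I_\Delta^*$ automatically) and $\link_\Delta G$ is closed under taking subfaces; your anticipated sign-matching is routine once an ordering of $[n]$ is fixed. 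The reduction to squarefree multidegrees is also standard and correctly invoked.

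One small remark: the paper's ``if and only if'' is slightly loose, since for a fixed $i$ the Betti number can vanish even when $G\in\Delta$ (namely when $\widetilde{\beta}_{i-1}(\link_\Delta G;\K)=0$). Your write-up actually handles this more carefully than the statement itself, by proving the formula $\beta_{i,\underline{a}}=\widetilde{\beta}_{i-1}(\link_\Delta G;\K)$ for $G\in\Delta$ and showing the strand is zero when $G\notin\Delta$; that is the precise content.
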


\begin{oss}\label{oss:hom} Hochster's formula relates explicitly the homology of a simplicial complex $\Delta$ with the graded Betti numbers of $I^*_{\Delta}$. For $j=n$, we obtain $\bn_{i,n}(I^*_\Delta) = \widetilde{\bn}_{i-1}(\Delta; \K).$
\end{oss}
%-------------------------------------------------------------------------------------------------------------------------------------------------
\section{Betti split\-ting for simplicial complexes}\label{sec:homSplitting}
Using Alexander dual ideals, it is possible to define the Betti splitting condition for a simplicial complex $\Delta$. The main result of this section (Theorem \ref{teor:splitforcomplex}) describe this condition recursively. Before proving it, we need some definitions.

\begin{defin} Let $\Delta$ be a simplicial complex and $\mathcal{F}(\Delta)=\mathcal{F}_1 \mathcal{t} \mathcal{F}_2$ a partition of $\mathcal{F}(\Delta)$. Let $\Delta_1=\langle F \, : \, F \in \mathcal{F}_1\rangle$ and $\Delta_2=\langle F \, : \, F \in \mathcal{F}_2\rangle$. We call $\Delta=\Delta_1\cup\Delta_2$ a {\em standard decomposition} of $\Delta$.
\end{defin}

\begin{oss}\label{disj}
The previous definition is the combinatorial counterpart of the assumption on disjoint minimal systems of generators in the definition of Betti splitting.
\end{oss}

Definition \ref{def:Ibettispli} for squarefree monomial ideals yields the following natural version for simplicial complexes.

\begin{defin}\label{def:bettispli} A standard decomposition $\Delta=\Delta_1\cup\Delta_2$ of a simplicial complex $\Delta$ on $n$ vertices is called a {\em Betti splitting} of $\Delta$ over $\K$ if $I^*_{\Delta}=I^*_{\Delta_1} + I^*_{\Delta_2}$ is a Betti splitting of $I^*_{\Delta} \subseteq \K[x_1,\cdots,x_n]$ over $\K$.

We say that $\Delta$ {\em admits a Betti splitting} over $\K$ if there exists a Betti splitting of $\Delta$ over $\K$.
\end{defin}

The following key definition is useful to state Theorem \ref{teor:splitforcomplex}.

\begin{defin}\label{def:homspli} Let $\Delta=\Delta_1\cup\Delta_2$ be a standard decomposition of a simplicial complex $\Delta$. We say that $\Delta=\Delta_1\cup\Delta_2$ is a {\em homology splitting} of $\Delta$ over $\K$ if $\Delta_1\cap\Delta_2=\emptyset$ or if
$$\widetilde{\beta}_k(\Delta;\K)=\widetilde{\bn}_k(\Delta_1; \K) + \widetilde{\bn}_k(\Delta_2; \K) + \widetilde{\bn}_{k-1}(\Delta_1\cap\Delta_2; \K), \text{ for every } k \in \mathbb{N}.$$
We say that $\Delta$ {\em admits a homology splitting} over $\K$ if there exists a homology splitting of $\Delta$ over $\K$.
\end{defin}

Notice that clearly, if $\Delta_1 \cap \Delta_2=\{\emptyset\}$, then the standard decomposition $\Delta=\Delta_1 \cup \Delta_2$ is trivially a homology splitting.

\begin{oss}
By Theorem \ref{thm:Hoch} and Remark \ref{disj}, the homology splitting formula for $\Delta$ is equivalent to the Betti splitting formula for $I_{\Delta}^*$, with $j=n$ and every $i \in \mathbb{N}$. 

Hence, $\Delta=\Delta_1 \cup \Delta_2$ is a Betti splitting of $\Delta$ implies that $\Delta=\Delta_1 \cup \Delta_2$ is a homology splitting of $\Delta$.

In general, the converse does not hold. In fact let $\Delta$ be the simplicial complex defined by $\langle 123,234,245,345 \rangle$. Clearly $\Delta=\langle 123,245\rangle \cup \langle 234,345\rangle$ is a standard decomposition of $\Delta$. It is easy to see that this is a homology splitting over every field $\K$. But it is not a Betti splitting, since $I_{\Delta}^*$ is the ideal considered in Example \ref{tostart} and the decomposition above is the algebraic counterpart of the standard decomposition given here.

In Theorem \ref{teo:3} we will describe a class of simplicial complexes and standard decompositions for which the two notions are equivalent.
\end{oss}

For sake of completeness, in the next result we give a straightforward equivalent condition to homology splitting condition.

\begin{pro}\label{hom}
Let $\Delta$ be a simplicial complex and $\Delta=\Delta_1 \cup \Delta_2$ be a standard decomposition of $\Delta$. Then, the following are equivalent:
\begin{enumerate}
\item[$\mathrm{(1)}$] $\Delta=\Delta_1 \cup \Delta_2$ is a homology splitting of $\Delta$ over $\K$;
\item[$\mathrm{(2)}$] The maps $\widetilde{H}_k(\Delta_1 \cap \Delta_2;\K) \stackrel{\phi_k}\rightarrow \widetilde{H}_k(\Delta_1;\K) \oplus \widetilde{H}_k(\Delta_2;\K)$ in the Mayer-Vietoris sequence are zero for every $k \in \mathbb{N}$.
\end{enumerate}
\end{pro}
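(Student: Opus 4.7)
The plan is to derive both implications directly from the reduced Mayer-Vietoris long exact sequence associated to the decomposition $\Delta=\Delta_1\cup\Delta_2$:
$$\cdots \to \widetilde{H}_k(\Delta_1\cap\Delta_2;\K) \xrightarrow{\phi_k} \widetilde{H}_k(\Delta_1;\K)\oplus\widetilde{H}_k(\Delta_2;\K) \xrightarrow{\psi_k} \widetilde{H}_k(\Delta;\K) \xrightarrow{\partial_k} \widetilde{H}_{k-1}(\Delta_1\cap\Delta_2;\K) \to \cdots.$$
All objects are finite-dimensional $\K$-vector spaces, so short exact sequences split automatically and rank-nullity considerations suffice throughout. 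If $\Delta_1\cap\Delta_2=\{\emptyset\}$ both conditions are trivially verified, so I reduce at once to the non-degenerate case, in which $\widetilde{H}_{-1}(\Delta_1\cap\Delta_2;\K)=0$ and hence $\phi_{-1}=0$.

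For $(2)\Rightarrow(1)$, if every $\phi_k$ vanishes then by exactness the long sequence breaks into short exact sequences
$$0 \to \widetilde{H}_k(\Delta_1;\K)\oplus\widetilde{H}_k(\Delta_2;\K) \xrightarrow{\psi_k} \widetilde{H}_k(\Delta;\K) \xrightarrow{\partial_k} \widetilde{H}_{k-1}(\Delta_1\cap\Delta_2;\K) \to 0,$$
and taking $\K$-dimensions yields exactly the identity of Definition \ref{def:homspli}.

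For $(1)\Rightarrow(2)$, the standard kernel/image accounting along the same long exact sequence (using $\dim\ker\psi_k=\dim\im\phi_k$, $\dim\ker\partial_k=\dim\im\psi_k$, and $\dim\im\partial_k=\widetilde{\beta}_{k-1}(\Delta_1\cap\Delta_2;\K)-\dim\im\phi_{k-1}$) gives
$$\widetilde{\beta}_k(\Delta;\K)=\widetilde{\beta}_k(\Delta_1;\K)+\widetilde{\beta}_k(\Delta_2;\K)+\widetilde{\beta}_{k-1}(\Delta_1\cap\Delta_2;\K)-\dim_{\K}\im\phi_k-\dim_{\K}\im\phi_{k-1}$$
for every $k\geq 0$. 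The homology splitting hypothesis forces $\dim_{\K}\im\phi_k+\dim_{\K}\im\phi_{k-1}=0$ for every $k\in\mathbb{N}$; since both summands are nonnegative and $\phi_{-1}=0$, a one-line induction on $k$ yields $\phi_k=0$ for all $k\in\mathbb{N}$. The argument is essentially formal homological algebra, and the only mild bookkeeping concerns the base case $k=0$ and the reduced $(-1)$-homology of the void complex (handled by isolating the $\Delta_1\cap\Delta_2=\{\emptyset\}$ case in Definition \ref{def:homspli}); I do not anticipate any genuine obstacle.
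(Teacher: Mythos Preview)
Your argument is correct and follows essentially the same route as the paper: both proofs use the reduced Mayer-Vietoris long exact sequence together with rank-nullity bookkeeping, handle the degenerate case $\Delta_1\cap\Delta_2=\{\emptyset\}$ separately, and for $(1)\Rightarrow(2)$ proceed by induction on $k$ starting from $\phi_{-1}=0$. Your packaging is slightly tidier---you write down the single identity $\widetilde{\beta}_k(\Delta)=\widetilde{\beta}_k(\Delta_1)+\widetilde{\beta}_k(\Delta_2)+\widetilde{\beta}_{k-1}(\Delta_1\cap\Delta_2)-\dim\im\phi_k-\dim\im\phi_{k-1}$ and read off both directions from it, whereas the paper does the kernel/image chase twice---but the content is identical.
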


{\em Proof.} Consider the Mayer-Vietoris exact sequence in homology arising from the decomposition $\Delta=\Delta_1 \cup \Delta_2$ (for sake of simplicity we omit the field $\K$ in the notations): $$... \rightarrow \widetilde{H}_k(\Delta_1 \cap \Delta_2) \stackrel{\phi_k}\rightarrow \bigoplus_{i=1}^2\widetilde{H}_k(\Delta_i) \stackrel{\psi_k}\rightarrow \widetilde{H}_k(\Delta) \stackrel{\partial_k}\rightarrow \widetilde{H}_{k-1}(\Delta_1 \cap \Delta_2) \stackrel{\phi_{k-1}}\rightarrow \bigoplus_{i=1}^2\widetilde{H}_{k-1}(\Delta_i) \rightarrow ...$$

First we prove that $\mathrm{(2)}$ implies $\mathrm{(1)}$. If $\Delta_1 \cap \Delta_2=\{\emptyset\}$, we have nothing to prove. Assume $\Delta_1 \cap \Delta_2 \neq \{\emptyset\}$, then $\widetilde{H}_{-1}(\Delta_1 \cap \Delta_2)=0$. From the long exact sequence above we get, for every $k \in \mathbb{N}$, \begin{align*}
\widetilde{\beta}_k(\Delta_1)+\widetilde{\beta}_k(\Delta_2) &= \dim_{\K}\Fe(\psi_k)+\dim_{\K}\im(\psi_k)=\dim_{\K}\im(\phi_k)+\dim_{\K}\im(\psi_k) \\
&= \dim_{\K}\im(\psi_k)=\dim_{\K}\Fe(\partial_k)=\widetilde{\beta}_k(\Delta)-\dim_{\K}\im(\partial_k) \\
&=\widetilde{\beta}_k(\Delta)-\dim_{\K}\Fe(\phi_{k-1})=\widetilde{\beta}_k(\Delta)-\widetilde{\beta}_{k-1}(\Delta_1 \cap \Delta_2).
\end{align*}
\indent To prove that $\mathrm{(1)}$ implies $\mathrm{(2)}$. we proceed by induction on $k \geq 0$. If $\Delta_1 \cap \Delta_2=\{\emptyset\}$, then it follows that all the maps $\phi_k=0$, for $k \in \mathbb{N}$. Assume $\Delta_1 \cap \Delta_2 \neq \{\emptyset\}$. For $k=0$ we have $\dim_{\K}\im(\phi_0)=\dim_{\K}\Fe(\phi_0)=\widetilde{\beta}_0(\Delta_1)+\widetilde{\beta}_0(\Delta_2)-\dim_{\K}\im(\psi_0)=\widetilde{\beta}_0(\Delta_1)+\widetilde{\beta}_0(\Delta_2)-\widetilde{\beta}_0(\Delta)=0$. Assume $k \geq 1$ and $\phi_{k-1}=0$; we prove that $\phi_k=0$ using an argument similar to the previous one: \begin{align*}
\dim_{\K}\im(\phi_k) &= \dim_{\K}\Fe(\psi_k)=\widetilde{\beta}_k(\Delta_1)+\widetilde{\beta}_k(\Delta_2)-\dim_{\K}\im(\psi_k)\\
&=\widetilde{\beta}_k(\Delta_1)+\widetilde{\beta}_k(\Delta_2)-\dim_{\K}\Fe(\partial_k)\\
&=\widetilde{\beta}_k(\Delta_1)+\widetilde{\beta}_k(\Delta_2)-\widetilde{\beta}_k(\Delta)+\dim_{\K}\im(\partial_k)\\
&=\widetilde{\beta}_k(\Delta_1)+\widetilde{\beta}_k(\Delta_2)-\widetilde{\beta}_k(\Delta)+\dim_{\K}\Fe(\phi_{k-1}) \\
&=\widetilde{\beta}_k(\Delta_1)+\widetilde{\beta}_k(\Delta_2)-\widetilde{\beta}_k(\Delta)+\widetilde{\beta}_{k-1}(\Delta_1 \cap \Delta_2)=0. \qed
\end{align*}

The following is an immediate corollary of the previous result.

\begin{coroll}\label{goodinter}
Let $\Delta$ be a simplicial complex and $\Delta=\Delta_1 \cup \Delta_2$ be a standard decomposition of $\Delta$. If $\Delta_1 \cap \Delta_2$ is acyclic over $\K$ then $\Delta=\Delta_1 \cup \Delta_2$ is a homology splitting of $\Delta$ over $\K$.
\end{coroll}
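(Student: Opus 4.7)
The plan is to deduce this directly from Proposition \ref{hom}, which characterizes homology splittings by the vanishing of the connecting maps $\phi_k \colon \widetilde{H}_k(\Delta_1 \cap \Delta_2;\K) \to \widetilde{H}_k(\Delta_1;\K) \oplus \widetilde{H}_k(\Delta_2;\K)$ in the Mayer--Vietoris sequence for every $k \in \mathbb{N}$. Since the hypothesis concerns only the intersection $\Delta_1 \cap \Delta_2$, all the work is already done by that equivalence; the task is merely to check that acyclicity kills the relevant maps.

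First I would fix the meaning of ``acyclic over $\K$'', namely $\widetilde{H}_k(\Delta_1 \cap \Delta_2;\K)=0$ for all $k \geq -1$. In particular $\widetilde{\beta}_{-1}(\Delta_1 \cap \Delta_2;\K)=0$, and by the recollection made just before Hochster's formula this forces $\Delta_1 \cap \Delta_2 \neq \{\emptyset\}$, so we are in the non-trivial case of Definition \ref{def:homspli}. This is the only mild subtlety: one should rule out that acyclicity sneaks in through the $k=-1$ slot in a way incompatible with the definition of homology splitting, and the paper's convention resolves this immediately.

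Next, for each $k \in \mathbb{N}$ the domain $\widetilde{H}_k(\Delta_1 \cap \Delta_2;\K)$ of the map $\phi_k$ is the zero vector space, so $\phi_k = 0$ trivially. Hence condition $(2)$ of Proposition \ref{hom} is satisfied, and applying the $(2) \Rightarrow (1)$ direction of that proposition yields the homology splitting conclusion. I do not anticipate any real obstacle here: the corollary is essentially a restatement of Proposition \ref{hom} under the strongest possible hypothesis making $\phi_k$ vanish (the source already vanishes), and the only care required is the bookkeeping around $\widetilde{H}_{-1}$ to ensure the hypothesis is compatible with the definition.
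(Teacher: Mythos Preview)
Your proposal is correct and matches the paper's approach exactly: the paper states this as an immediate corollary of Proposition~\ref{hom} without further argument, and your derivation---observing that acyclicity of $\Delta_1\cap\Delta_2$ forces each $\phi_k$ to have zero domain, hence to vanish, so that condition~(2) of Proposition~\ref{hom} holds---is precisely the intended one-line justification. Your extra care with the $k=-1$ bookkeeping is harmless and correct, though the paper does not spell it out.
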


%The next algorithm checks if a given standard decomposition of a simplicial complex is a homology splitting over a given field $\K$. It can be given another version of it using Proposition \ref{hom}.
%
%\begin{algorithm}[!htb]
%  \caption{ $isHomologySplitting(\Delta,\Delta_1,\Delta_2,\K)$ }
%  \label{algorithm1}
%    \begin{algorithmic}
%        \medskip
%        \State {\bf Input:} $\K$, a field
%        \State {\bf Input:} $\Delta$, a simplicial complex of dimension $d$
%        \State {\bf Input:} $(\Delta_1,\Delta_2)$ a standard decomposition of $\Delta$
%        \State N:=0
%        \If {$\Delta_1 \cap \Delta_2=\{\emptyset\}$}
%        \State {{\bf return} {\scshape true}} \EndIf
%        \State {Compute the homology of $\Delta$, $\Delta_1$, $\Delta_2$ and $\Delta_1 \cap \Delta_2$ with coefficients in $\K$}
%        \For {$0 \leq k \leq dim(\Delta)$}
%        \If {$\widetilde{\beta}_k(\Delta;\K)=\widetilde{\bn}_k(\Delta_1; \K) + \widetilde{\bn}_k(\Delta_2; \K) + \widetilde{\bn}_{k-1}(\Delta_1\cap\Delta_2; \K)$}
%        \State {$N:=N+1$}
%        \EndIf
%        \EndFor
%        \If {$N=dim(\Delta)+1$}
%        \State {{\bf return} {\scshape true}}
%        \Else
%        \State {{\bf return} {\scshape false}} \EndIf
%    \end{algorithmic}
%\end{algorithm}

Finally, we are able to prove the main theorem of this section: a Betti splitting for $\Delta$ can be characterized in terms of homology splittings or {\em recursively} in terms of Betti splitting of vertex-links.

\begin{teor}\label{teor:splitforcomplex} Let $\Delta=\Delta_1\cup \Delta_2$ be a standard decomposition of a simplicial complex $\Delta$. Then, the following statements are equivalent:
\begin{itemize}
\item[$\mathrm{(1)}$] $\Delta=\Delta_1\cup \Delta_2$ is a Betti splitting of $\Delta$ over $\K$;
\item[$\mathrm{(2)}$] $\link_\Delta F=\link_{\Delta_1} F \cup \link_{\Delta_2} F$ is a homology splitting of $\link_\Delta F$ over $\K$, for each face $F \in \Delta_1\cap \Delta_2$;
\item[$\mathrm{(3)}$] $\link_\Delta v=\link_{\Delta_1} v \cup \link_{\Delta_2} v$ is a Betti splitting over $\K$ of $\link_\Delta v$, for each vertex $v \in \Delta_1\cap \Delta_2$ and $\Delta=\Delta_1\cup \Delta_2$ is a homology splitting of $\Delta$ over $\K$.
\end{itemize}
\end{teor}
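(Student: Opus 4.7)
The strategy is to translate the Betti splitting condition on $I^*_\Delta$ into a collection of homology splitting conditions on links via Hochster's formula (Theorem~\ref{thm:Hoch}), and then to exploit the identity $\link_\Delta F=\link_{\link_\Delta v}(F\setminus\{v\})$, valid for any $v\in F\in\Delta$, to pass between arbitrary faces and vertex-links.

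The first preliminary step is to identify $I^*_{\Delta_1}\cap I^*_{\Delta_2} = I^*_{\Delta_1\cap\Delta_2}$. The intersection of two monomial ideals is generated by pairwise lcm's, and $\mathrm{lcm}(x_{[n]\setminus F}, x_{[n]\setminus G})=x_{[n]\setminus(F\cap G)}$; its minimal generators correspond to the maximal elements among $\{F\cap G : F\in\mathcal{F}(\Delta_1),\, G\in\mathcal{F}(\Delta_2)\}$, which are precisely the facets of $\Delta_1\cap\Delta_2$.

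For (1) $\Leftrightarrow$ (2), all four ideals involved are squarefree, so by the remark following Definition~\ref{def:Ibettispli} the splitting equation is equivalent to its multigraded version. By Hochster's formula each nonzero term sits in a squarefree multidegree $\underline{a}_G$ with $\supp(\underline{a}_G)=[n]\setminus G$ and equals $\widetilde{\beta}_{i-1}$ of the link of $G$. I would case-split on $G$: if $G\in\Delta_1\setminus\Delta_2$, then $\link_\Delta G=\link_{\Delta_1} G$ (any face $H\cup G\in\Delta_2$ would drag $G$ into $\Delta_2$) while the other two Betti terms vanish, so the equation is automatic; symmetrically for $G\in\Delta_2\setminus\Delta_1$. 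If $G\in\Delta_1\cap\Delta_2$, the immediate identities
\begin{align*}
\link_\Delta G &= \link_{\Delta_1} G\cup\link_{\Delta_2} G, & \link_{\Delta_1} G\cap\link_{\Delta_2} G &= \link_{\Delta_1\cap\Delta_2} G,
\end{align*}
combined with the first step, turn the splitting equation in multidegree $\underline{a}_G$ into precisely the homology splitting equation of Definition~\ref{def:homspli} applied to $\link_\Delta G=\link_{\Delta_1} G\cup\link_{\Delta_2} G$. This gives (1) $\Leftrightarrow$ (2).

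For (2) $\Leftrightarrow$ (3), the essential tool is the set-theoretic identity $\link_\Delta F=\link_{\link_\Delta v}(F\setminus\{v\})$ for $v\in F$, which also holds for $\Delta_1$, $\Delta_2$ and $\Delta_1\cap\Delta_2$. To prove (2) $\Rightarrow$ (3), I fix a vertex $v\in\Delta_1\cap\Delta_2$ and apply the already-established (2) $\Rightarrow$ (1) to the decomposition $\link_\Delta v=\link_{\Delta_1} v\cup\link_{\Delta_2} v$: under the link-of-link identity, condition (2) restricted to faces $G\in\Delta_1\cap\Delta_2$ containing $v$ is exactly condition (2) applied to $\link_\Delta v$, hence $\link_\Delta v$ admits a Betti splitting; the homology splitting of $\Delta$ is the $F=\emptyset$ case of (2). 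Conversely, for (3) $\Rightarrow$ (2), I handle $F=\emptyset$ by the given homology splitting of $\Delta$, and for any nonempty $F\in\Delta_1\cap\Delta_2$ I choose $v\in F$ and apply (1) $\Rightarrow$ (2) to the Betti splitting of $\link_\Delta v$ provided by (3): the instance $F\setminus\{v\}\in\link_{\Delta_1} v\cap\link_{\Delta_2} v$ delivers the required homology splitting of $\link_{\link_\Delta v}(F\setminus\{v\})=\link_\Delta F$. The main technical obstacle is the bookkeeping in the second step, notably the identification $I^*_{\Delta_1}\cap I^*_{\Delta_2}=I^*_{\Delta_1\cap\Delta_2}$ and the vanishing of Hochster's contributions at multidegrees $\underline{a}_G$ for $G$ missing from some of $\Delta_1,\Delta_2,\Delta_1\cap\Delta_2$; once this dictionary is fixed, the recursion via vertex-links is purely formal.
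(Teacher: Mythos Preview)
Your proposal is correct and follows essentially the same route as the paper: both arguments hinge on the identification $I^*_{\Delta_1}\cap I^*_{\Delta_2}=I^*_{\Delta_1\cap\Delta_2}$, Hochster's formula in multidegrees, the case analysis on whether $G$ lies in $\Delta_1\setminus\Delta_2$, $\Delta_2\setminus\Delta_1$, or $\Delta_1\cap\Delta_2$, and the link-of-link identity $\link_\Delta F=\link_{\link_\Delta v}(F\setminus\{v\})$. The only organizational difference is that you prove the direct equivalences $(1)\Leftrightarrow(2)$ and $(2)\Leftrightarrow(3)$ (using the paper's stated equivalence of the graded and multigraded splitting formulas), whereas the paper runs the cycle $(1)\Rightarrow(2)\Rightarrow(3)\Rightarrow(1)$; the content is the same.
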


\begin{proof}
Notice that since $\Delta=\Delta_1 \cup \Delta_2$ is a standard decomposition of $\Delta$, then $\link_{\Delta}F=\link_{\Delta_1}F \cup \link_{\Delta_2}F$ is a standard decomposition of $\link_{\Delta}F$, for every $F \in \Delta$.

First we prove that $\mathrm{(1)}$ implies $\mathrm{(2)}$. If $\Delta_1 \cap \Delta_2=\{\emptyset\}$ we have nothing to prove, since $\Delta=\Delta_1 \cup \Delta_2$ is a homology splitting of $\Delta$ over $\K$. We may assume $\Delta_1 \cap \Delta_2 \neq \{\emptyset\}$. By definition,
\begin{center}
$\beta_{i,j}(I_{\Delta}^*)=\beta_{i,j}(I_{\Delta_1}^*)+\beta_{i,j}(I_{\Delta_2}^*)+\beta_{i-1,j}(I_{\Delta_1}^* \cap I_{\Delta_2}^*)$, for every $i,j \in \mathbb{N}$.
\end{center}
Since all the maps $\Tor_i(I_{\Delta_1}^* \cap I_{\Delta_2}^*,\K) \rightarrow \Tor_i(I_{\Delta_1}^*,\K) \oplus \Tor_i(I_{\Delta_2}^*,\K)$ are zero, then all the maps $\Tor_i(I_{\Delta_1}^* \cap I_{\Delta_2}^*,\K)_{\underline{a}} \rightarrow \Tor_i(I_{\Delta_1}^*,\K)_{\underline{a}} \oplus \Tor_i(I_{\Delta_2}^*,\K)_{\underline{a}}$ are zero, for every multidegree $\underline{a} \in \mathbb{Z}^n$. Then,
\begin{center}
$\beta_{i,\underline{a}}(I_{\Delta}^*)=\beta_{i,\underline{a}}(I_{\Delta_1}^*)+\beta_{i,\underline{a}}(I_{\Delta_2}^*)+\beta_{i-1,\underline{a}}(I_{\Delta_1}^* \cap I_{\Delta_2}^*)$, for every $i \in \mathbb{N}$ and $\underline{a} \in \mathbb{Z}^n$.
\end{center}
Recall that $I_{\Delta_1}^* \cap I_{\Delta_2}^*=I_{\Delta_1 \cap \Delta_2}^*$. By Hochster's formula, $\beta_{i,\underline{a}}(I_{\Delta}^*) \neq 0$ if and only if $\supp(\underline{a})=[n] \setminus F$, for some face $F \in \Delta$; in this case $\beta_{i,\underline{a}}(I_{\Delta}^*)=\widetilde{\beta}_{i-1}(\link_{\Delta} F)$, for every $i \in \mathbb{N}$. Let $F \in \Delta_1 \cap \Delta_2$. Then, $$\widetilde{\beta}_{i-1}(\link_{\Delta} F;\K)=\widetilde{\beta}_{i-1}(\link_{\Delta_1} F;\K)+\widetilde{\beta}_{i-1}(\link_{\Delta_2} F;\K)+\widetilde{\beta}_{i-2}(\link_{\Delta_1 \cap \Delta_2} F;\K),$$ for every $i \in \mathbb{N}$.

Now we prove that $\mathrm{(2)}$ implies $\mathrm{(3)}$. Consider $F=\emptyset$. By our assumption, we have that $\Delta=\Delta_1 \cup \Delta_2$ is a homology splitting of $\Delta$ over $\K$. If $\Delta_1 \cap \Delta_2=\{\emptyset\}$, we are done. Hence assume $\Delta_1 \cap \Delta_2 \neq \{\emptyset\}$. Let $v \in \Delta_1 \cap \Delta_2$ be a vertex and $\underline{a} \in \mathbb{Z}^n$ such that $\supp(\underline{a})=[n] \setminus G$ for a suitable face $G \in \link_{\Delta}v$. If $G \notin \link_{\Delta_1}v$ (the same for $\link_{\Delta_2}v$), it follows that $\link_{\Delta}v=\link_{\Delta_2}v$ and we have nothing to prove. We may assume $G \in \link_{\Delta_1}v \cap \link_{\Delta_2}v=\link_{\Delta_1 \cap \Delta_2}v$. By definition, we have that $F=G \cup \{v\}$ is a face of $\Delta_1 \cap \Delta_2$. By $\mathrm{(2)}$, we know that $\link_{\Delta}F=\link_{\Delta_1} F \cup \link_{\Delta_2} F$ is a homology splitting of $\link_{\Delta}F$. Note that $\widetilde{\beta}_{i-1}(\link_{\Delta}F)=\widetilde{\beta}_{i-1}(\link_{\link_{\Delta}v}G)=\beta_{i,\underline{a}}(I_{\link_{\Delta}v}^*),$ over $\K$. Then, $$\beta_{i,\underline{a}}(I_{\link_{\Delta}v}^*)=\beta_{i,\underline{a}}(I_{\link_{\Delta_1}v}^*)+\beta_{i,\underline{a}}(I_{\link_{\Delta_2}v}^*)+\beta_{i-1,\underline{a}}(I_{\link_{\Delta_1 \cap \Delta_2}v}^*).$$ Summing over all $\underline{a} \in \mathbb{Z}^n$ such that $|\supp(\underline{a})|=j$, we obtain the desired splitting formula for $j$ and for every $i \in \mathbb{N}$.

Finally, we prove that $\mathrm{(3)}$ implies $\mathrm{(1)}$. Since $\Delta=\Delta_1 \cup \Delta_2$ is a homology splitting of $\Delta$, for $j=n$ the splitting formula holds. Let $j \neq n$ and $\underline{a} \in \mathbb{Z}^n$, such that $\supp(\underline{a})=[n] \setminus F$, for some face $F \in \Delta$ with $|F|=n-j$. If $F \in \Delta \setminus \Delta_1$ (the same for $\Delta_2$) we have $\link_{\Delta}F=\link_{\Delta_2}F$. Then, $\beta_{i,\underline{a}}(I_{\Delta}^*)=\beta_{i,\underline{a}}(I_{\Delta_2}^*)$ and $\beta_{i,\underline{a}}(I_{\Delta_1}^*)=\beta_{i-1,\underline{a}}(I_{\Delta_1}^* \cap I_{\Delta_2}^*)=0$. Then, we may assume $F \in \Delta_1 \cap \Delta_2$. Let $v \in F$ and $\underline{b} \in \mathbb{Z}^n$ such that $\supp(\underline{b})=[n] \setminus (F \setminus v)$. By assumption, \begin{center}
$\beta_{i,\underline{b}}(I_{\link_{\Delta}v}^*)=\beta_{i,\underline{b}}(I_{\link_{\Delta_1}v}^*)+\beta_{i,\underline{b}}(I_{\link_{\Delta_2}v}^*)+\beta_{i-1,\underline{b}}(I_{\link_{\Delta_1}v}^* \cap I_{\link_{\Delta_2}v}^*),$
\end{center}
for every $i \in \mathbb{N}$. By Hochster's formula, $$\beta_{i,\underline{b}}(I_{\link_{\Delta}v}^*)=\widetilde{\beta}_{i-1}(\link_{\link_{\Delta}v}(F \setminus \{v\}))=\widetilde{\beta}_{i-1}(\link_{\Delta} F)=\beta_{i,\underline{a}}(I_{\Delta}^*).$$ Applying the previous argument to $\Delta_1$, $\Delta_2$ and $\Delta_1 \cap \Delta_2$, we get $$\beta_{i,\underline{a}}(I_{\Delta}^*)=\beta_{i,\underline{a}}(I_{\Delta_1}^*)+\beta_{i,\underline{a}}(I_{\Delta_2}^*)+\beta_{i-1,\underline{a}}(I_{\Delta_1}^* \cap I_{\Delta_2}^*).$$ Summing over all $\underline{a} \in \mathbb{Z}^n$ such that $|\supp(\underline{a})|=j$ we conclude the proof.
\end{proof}

Apart its intrinsic interest, using Theorem \ref{teor:splitforcomplex}, it is possible to write down an algorithm to test if a given standard decomposition of a simplicial complex is a Betti splitting over a given field $\K$, see \cite{ico}.

%\begin{algorithm}[!htb]
%  \caption{ $IsBettiSplitting(\Delta, \Delta_1, \Delta_2, \K)$ }
%  \label{algorithm2}
%    \begin{algorithmic}
%        \medskip
%        \State {\bf Input:} $\K$, a field
%        \State {\bf Input:} $\Delta$, a simplicial complex of dimension $d$
%        \State {\bf Input:} $\Delta=\Delta_1 \cup \Delta_2$, a standard decomposition of $\Delta$
%        \State N:=0
%        \For {{\bf each} face $F$ of $\Delta_1\cap \Delta_2$}
%        \State {Compute $\link_{\Delta}F$, $\link_{\Delta_1}F$ and $\link_{\Delta_2}F$}
%        \If {not $isHomologySplitting(\link_{\Delta}F,\link_{\Delta_1}F,\link_{\Delta_2}F,\K)$}
%        \State {N:=N+1} {{\bf break}}
%        \EndIf
%        \EndFor
%        \If {N>0}
%        \State {{\bf return} {\scshape false}}
%        \Else
%        \State {{\bf return} {\scshape true}}
%        \EndIf
%    \end{algorithmic}
%\end{algorithm}
%-------------------------------------------------------------------------------------------------------------------------------------------------
\section{Essential facets}\label{subsec:simplEss}\mbox{}%This is for instance the case of shellable simplicial complexes .
This section is devoted to prove Theorem \ref{teo:1}, ensuring that if $\Delta$ has not trivial top homology over $\K$, then it admits a homology splitting over $\K$: it will be useful in the following. In order to do this, we will consider a special class of decompositions for a simplicial complex. 

In \cite{ur2}, the authors consider a special kind of decomposition for a monomial ideal $I$. Assume that $I$, $J$ and $K$ are monomial ideals as in Definition \ref{def:Ibettispli} and $K=(m)$ is generated by a single monomial $m$. From the simplicial complexes point of view, this corresponds to the removal of a single facet from a simplicial complex.
 
%In Theorem \ref{teor:splitforcomplex}(3), we proved that in order to understand Betti splitting, we need to understand {\em homology splitting}. In this section we study standard decompositions $\Delta=\langle G\in\mathcal{F}(\Delta):G \neq F\rangle \cup \langle F \rangle$ that give rise to a homology splitting for $\Delta$. 
%
The following result is very useful for our purposes.

\begin{pro}{\em (\cite{delfinado}, Section 3)}.\label{edel}
Let $\Delta$ be a simplicial complex, $F \in \mathcal{F}(\Delta)$ be a facet of $\Delta$ of dimension $d$ and $\K$ be a field. Then $F$ belongs to a $d$-cycle of $\Delta$ if and only if
$$\widetilde{\bn}_k(\Delta\setminus\{F\}; \K)=\begin{cases} \widetilde{\bn}_k(\Delta; \K) -1 & \text{ if } k=d \\ \widetilde{\bn}_k(\Delta; \K) & \text{ otherwise}
\end{cases}$$
%\item $F$ does not belong to a $d$-cycle of $\Delta$ if and only if
%$$\widetilde{\bn}_k(\Delta'; \K)=\begin{cases} \widetilde{\bn}_k(\Delta; \K) +1 & \text{ if } k=d-1 \\ \widetilde{\bn}_k(\Delta; \K) & \text{ otherwise}
%\end{cases}$$
\end{pro}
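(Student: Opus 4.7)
The approach is to use the relative long exact sequence of the pair $(\Delta,\Delta')$, where $\Delta':=\Delta\setminus\{F\}$, and exploit the fact that only one chain group changes. I would start from the observation that, because $F$ is a facet of dimension $d$, removing $F$ affects the simplicial chain complex only in degree $d$: one has $C_k(\Delta)=C_k(\Delta')$ for $k\neq d$ and $C_d(\Delta)=C_d(\Delta')\oplus \K\cdot F$. This yields a short exact sequence of chain complexes
$$0 \to C_*(\Delta') \to C_*(\Delta) \to Q_* \to 0,$$
where $Q_*$ is concentrated in degree $d$ with $Q_d\cong\K$ generated by the class of $F$, so $H_d(Q_*)=\K$ and $H_k(Q_*)=0$ for $k\neq d$.

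Passing to the induced long exact sequence in reduced homology, one immediately gets $\widetilde{H}_k(\Delta';\K)\cong \widetilde{H}_k(\Delta;\K)$ for every $k\notin\{d-1,d\}$, which already accounts for the case $k\neq d,\,d-1$ of the claim. The only interesting portion is the five-term sequence
$$0 \to \widetilde{H}_d(\Delta') \xrightarrow{\alpha} \widetilde{H}_d(\Delta) \xrightarrow{\pi} \K \xrightarrow{\partial} \widetilde{H}_{d-1}(\Delta') \xrightarrow{\iota} \widetilde{H}_{d-1}(\Delta) \to 0.$$
The crucial identification, obtained from a direct snake-lemma computation, is that $\partial(1)=[\partial F]\in \widetilde{H}_{d-1}(\Delta')$; this makes sense because $\partial F$ is a cycle in $C_*(\Delta')$ (since $\partial^2=0$) and its support lies in $\Delta'$ (since $F$ is a facet, so $C_{d-1}(\Delta)=C_{d-1}(\Delta')$).

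The heart of the argument is then the tautological reformulation: $F$ lies in some $d$-cycle of $\Delta$ if and only if one can write $z=cF+z'$ with $c\neq 0$ and $z'\in C_d(\Delta')$ satisfying $c\,\partial F+\partial z'=0$, which happens if and only if $[\partial F]=0$ in $\widetilde{H}_{d-1}(\Delta')$, i.e.~if and only if $\partial=0$. Assuming $\partial=0$, the five-term sequence splits into
$$0\to \widetilde{H}_d(\Delta')\to \widetilde{H}_d(\Delta)\to \K\to 0 \quad\text{and}\quad \widetilde{H}_{d-1}(\Delta')\cong \widetilde{H}_{d-1}(\Delta),$$
giving exactly the displayed formula. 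Conversely, if $\partial\neq 0$, then $\partial$ is injective (as $\K$ is one-dimensional), hence $\pi=0$, forcing $\widetilde{\beta}_d(\Delta')=\widetilde{\beta}_d(\Delta)$ and $\widetilde{\beta}_{d-1}(\Delta')=\widetilde{\beta}_{d-1}(\Delta)+1$, contradicting the claimed equation; this completes the biconditional. The main obstacle is the snake-lemma identification of the connecting map with $[\partial F]$, which is the only place where one must unwind definitions carefully; everything else reduces to elementary dimension counting in a five-term exact sequence together with one-dimensionality of the middle term.
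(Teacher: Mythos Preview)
The paper does not supply its own proof of this proposition; it is quoted from \cite{delfinado} and used as a black box. Your argument is correct and is precisely the standard one: the chain complexes of $\Delta$ and $\Delta'=\Delta\setminus\{F\}$ differ only by a one-dimensional summand in degree $d$, so the long exact sequence of the pair collapses outside degrees $d$ and $d-1$, and the connecting map is multiplication by the class $[\partial F]\in\widetilde{H}_{d-1}(\Delta';\K)$. The dichotomy ``$F$ lies in a $d$-cycle'' $\Leftrightarrow$ ``$\partial F$ bounds in $\Delta'$'' $\Leftrightarrow$ ``the connecting map vanishes'' then yields exactly the two cases. This is also the content of the incremental viewpoint in \cite{delfinado}: inserting a $d$-simplex either creates a $d$-cycle (raising $\widetilde\beta_d$ by one) or fills a $(d-1)$-cycle (lowering $\widetilde\beta_{d-1}$ by one), and your proof is the homological-algebra formalization of that statement. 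There is nothing to correct.
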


Proposition \ref{edel} ensures that removing a facet of dimension $d$ from a $d$-cycle of $\Delta$ only affects the $d$-{th} homology group of $\Delta$. This suggests to remove a facet from a $d$-cycle, if available, in order to verify easily the homology splitting condition. 

All these considerations yields to the following definition. 

\begin{defin}\label{def:essential} Let $\Delta$ be a simplicial complex and let $F \in \mathcal{F}(\Delta)$ be a $d$-dimensional facet in $\Delta$.
We call $F$ an {\em essential} facet of $\Delta$ with respect to a field $\K$ if and only if $F$ belongs to a $d$-cycle of $\Delta$.

We denote by $\mathcal{E}_d(\Delta,\K)$ the collection of $d$-dimensional essential facets of $\Delta$ with respect to $\K$.
\end{defin}

\begin{lemma}\label{lem1} Let $\Delta$ be a simplicial complex. If $\mathcal{E}_d(\Delta,\K) \neq \emptyset$ then $\widetilde{\beta}_d(\Delta;\K) \neq 0$. Also the converse holds if $\mathrm{dim}(\Delta)=d$. 
\end{lemma}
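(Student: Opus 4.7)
The plan is to treat the two implications separately, both resting on routine consequences of Proposition \ref{edel} and of the dimension hypothesis.

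For the forward direction, I would invoke Proposition \ref{edel} directly. Suppose $F \in \mathcal{E}_d(\Delta,\K)$, so $F$ is a $d$-dimensional facet belonging to some $d$-cycle. Then Proposition \ref{edel} yields
$$\widetilde{\beta}_d(\Delta\setminus\{F\};\K)=\widetilde{\beta}_d(\Delta;\K)-1.$$
Since Betti numbers are non-negative integers, this identity forces $\widetilde{\beta}_d(\Delta;\K)\geq 1$, which is exactly the conclusion. No further topological analysis is needed here.

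For the converse, I would argue chain-theoretically. Assume $\dim(\Delta)=d$ and $\widetilde{\beta}_d(\Delta;\K)\neq 0$. Since $\Delta$ has no faces of dimension $d+1$, the simplicial chain complex of $\Delta$ has $C_{d+1}(\Delta;\K)=0$, whence $\widetilde{H}_d(\Delta;\K)=Z_d(\Delta;\K)$, the space of $d$-cycles. Because $\widetilde{\beta}_d(\Delta;\K)\neq 0$, we can pick a non-zero $d$-cycle
$$z=\sum_{\dime(F')=d} c_{F'}\, F' \in Z_d(\Delta;\K).$$
Choose any $F$ with $c_F\neq 0$; since $\dim(\Delta)=d$, every $d$-face is maximal, so $F\in\mathcal{F}(\Delta)$. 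By construction $F$ belongs to the $d$-cycle $z$, so $F\in\mathcal{E}_d(\Delta,\K)$, which shows $\mathcal{E}_d(\Delta,\K)\neq\emptyset$.

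There is no genuine obstacle in this argument; the only points requiring care are (a) using non-negativity of Betti numbers in the first direction and (b) using $\dim(\Delta)=d$ in the converse precisely to eliminate $(d+1)$-chains and to guarantee that a $d$-face with non-zero coefficient in $z$ is automatically a facet, which is exactly where the hypothesis $\dim(\Delta)=d$ is essential and without which the converse can fail.
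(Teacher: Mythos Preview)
Your proof is correct. The forward implication you give via Proposition~\ref{edel} is valid; the paper dispatches this direction in one line (``trivial by definition''), relying implicitly on the observation that a facet cannot appear in any boundary, so a $d$-cycle supported on it represents a nonzero homology class. Your route through Proposition~\ref{edel} reaches the same conclusion with no extra cost.

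For the converse the approaches genuinely differ. The paper builds $\Delta$ incrementally as a filtration $\{\emptyset\}=\Delta_0\subset\Delta_1\subset\cdots\subset\Delta_M=\Delta$, adding one face at a time, and picks the first stage $\Delta_j$ at which $\widetilde{\beta}_d$ becomes nonzero; the last face $F_j$ inserted is then a $d$-facet lying on a $d$-cycle. Your argument is more direct: since $\dim\Delta=d$ forces $C_{d+1}=0$, you identify $\widetilde{H}_d$ with $Z_d$, take any nonzero cycle, and read off an essential facet from its support. Both arguments use the hypothesis $\dim\Delta=d$ in the same essential way (to rule out $(d+1)$-faces), but yours avoids the filtration machinery entirely and is shorter. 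The paper's incremental viewpoint, on the other hand, ties in with the algorithmic perspective of \cite{delfinado} used elsewhere in the section.
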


\begin{proof}
The first part is trivial by definition. Then, setting $d=\mathrm{dim}(\Delta)$, we have to prove only that $\widetilde{\beta}_d(\Delta;\K) \neq 0$ implies $\mathcal{E}_d(\Delta,\K) \neq \emptyset$. Let $(\Delta_i)_{i=0, \dots, M}$ be a collection of simplicial complexes such that:
\begin{itemize}
\item $\Delta_0:=\{\emptyset\}$, $\Delta_M:=\Delta$;
\item for each $i\in \{0, \dots , M-1\}$, $\Delta_{i+1}=\Delta_i \cup \{F_{i+1}\}$, where $F_{i+1}$ is a single face of $\Delta$.
\end{itemize}
Given such a collection, let $j:=\min\{i \in \{0,\cdots,M\}:\widetilde{\bn}_d(\Delta_i; \K)\neq 0\}$. We set $F:=F_j$, the face introduced in $\Delta_j$. Since $\widetilde{\bn}_d(\Delta_{j}; \K)\neq 0$, $F$ is a facet of dimension $d$ in $\Delta$ which belongs to a $d$-cycle of $\Delta_j$. Then $F \in \mathcal{E}_d(\Delta,\K)$.
\end{proof}

In the next example, we show that in general the converse of Lemma \ref{lem1} does not hold. 

\begin{example}\label{stella}
The condition $\widetilde{\bn}_d(\Delta; \K)\neq 0$ does not imply $\mathcal{E}_d(\Delta,\K)$, if $d \neq \dim(\Delta)$. In fact, consider the simplicial complex $\Delta=\langle 123,345,246\rangle$. It is immediate to see that $\widetilde{\beta}_1(\Delta; \K) \neq 0$, for every field $\K$, but there are no facets of dimension $1$.
\end{example}

%\begin{lemma}\label{lemma:teo1} Let $\Delta$ be a simplicial complex and let $F \in \mathcal{F}(\Delta)$ be a $d$-dimensional essential facet over $\K$. Then $\langle G \, | \, G \in \mathcal{F}(\Delta), G \neq F \rangle=\Delta\setminus \{F\}$.
%\end{lemma}

In the next result, we prove that an essential face over $\K$ induces a homology splitting over $\K$.

\begin{pro}\label{pro:essential} Let $\Delta$ be a simplicial complex. Assume $\mathcal{E}_d(\Delta,\K) \neq \emptyset$ and let $F \in \mathcal{E}_d(\Delta,\K)$. Then $\Delta=\langle G \in \mathcal{F}(\Delta):G \neq F\rangle \cup \langle F\rangle$ is a homology splitting of $\Delta$ over $\K$.
\end{pro}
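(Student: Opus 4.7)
The plan is to verify the homology splitting formula
\[
\widetilde\beta_k(\Delta;\K)=\widetilde\beta_k(\Delta_1;\K)+\widetilde\beta_k(\Delta_2;\K)+\widetilde\beta_{k-1}(\Delta_1\cap\Delta_2;\K)
\]
directly for every $k\in\N$, where $\Delta_1:=\langle G\in\mathcal F(\Delta):G\neq F\rangle$ and $\Delta_2:=\langle F\rangle$. Since $\Delta_2$ is a single $d$-simplex, it is contractible and contributes $\widetilde\beta_k(\Delta_2;\K)=0$ for every $k$, so the problem reduces to computing the other two terms.

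The heart of the argument will be to identify $\Delta_1\cap\Delta_2$ with $\partial F$ (the boundary of the $d$-simplex $F$) and to show that $\Delta_1$ is obtained from $\Delta$ by removing only the single face $F$. To do so I would exploit essentiality by fixing a $d$-cycle $z=c_0F+\sum_{j\geq 1}c_jF_j\in Z_d(\Delta;\K)$ with $c_0\neq 0$ and $F_j$ pairwise distinct $d$-faces different from $F$. For each $j\geq 1$, any facet of $\Delta$ containing $F_j$ must differ from $F$ (else $F_j\subseteq F$ together with $\dim F_j=\dim F$ would force $F_j=F$), so $F_j\in\Delta_1$. The relation $\partial z=0$ then forces every codimension-one subface $G$ of $F$ to appear in $\partial F_j$ for some $j\geq 1$, giving $G\in\Delta_1$, and by taking subfaces every proper subface of $F$ lies in $\Delta_1$. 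Combined with the observation that any face of $\Delta$ not contained in $F$ is automatically in $\Delta_1$, this yields $\Delta_1=\Delta\setminus\{F\}$ and $\Delta_1\cap\Delta_2=\partial F$.

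The remainder is a direct substitution. Topologically $\partial F\cong\S^{d-1}$ for $d\geq 1$, so $\widetilde\beta_{k-1}(\Delta_1\cap\Delta_2;\K)=\delta_{k,d}$. Since $F$ is essential and $\Delta_1=\Delta\setminus\{F\}$, Proposition \ref{edel} yields $\widetilde\beta_k(\Delta_1;\K)=\widetilde\beta_k(\Delta;\K)-\delta_{k,d}$. Plugging these into the right-hand side of the splitting formula gives $(\widetilde\beta_k(\Delta;\K)-\delta_{k,d})+0+\delta_{k,d}=\widetilde\beta_k(\Delta;\K)$, as required. The corner case $d=0$ has to be addressed separately: then $F=\{v\}$ is a singleton facet, so $v$ is not in any other facet of $\Delta$, hence $\Delta_1\cap\Delta_2=\{\emptyset\}$ and the conclusion is built into Definition \ref{def:homspli}.

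The main obstacle is the identification $\Delta_1\cap\Delta_2=\partial F$. A priori $\Delta_1$ is generated only by facets of $\Delta$ different from $F$, so a proper subface of $F$ need not belong to $\Delta_1$ at all. Essentiality is precisely the combinatorial condition that rules this out, and the cycle computation above is the mechanism that converts it into the topological identification $\Delta_1\cap\Delta_2\cong\S^{d-1}$ from which Proposition \ref{edel} takes over.
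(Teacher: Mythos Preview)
Your proof is correct and follows essentially the same approach as the paper: both set $\Delta_1=\langle G\in\mathcal F(\Delta):G\neq F\rangle$, $\Delta_2=\langle F\rangle$, use essentiality of $F$ to show that every $(d-1)$-subface of $F$ lies in some other facet (so that $\Delta_1=\Delta\setminus\{F\}$ and $\Delta_1\cap\Delta_2=\partial F\cong\S^{d-1}$), then invoke Proposition~\ref{edel} and the contractibility of $\Delta_2$ to verify the splitting identity, treating $d=0$ separately. Your cycle argument with $z=c_0F+\sum_{j\geq 1}c_jF_j$ is just an explicit unpacking of the paper's one-line observation that ``$\partial F$ is a boundary in $\Delta\setminus\{F\}$''.
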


\begin{proof}
For simplicity, in the proof we omit the field $\K$ from the notations. Consider the simplicial complexes
\begin{itemize}
\item $\Delta_1:=\langle G \, | \, G \in \mathcal{F}(\Delta), G \neq F \rangle$,
\item $\Delta_2:=\langle F\rangle$.
\end{itemize}
We claim that $\Delta=\Delta_1 \cup \Delta_2$ is a standard decomposition of $\Delta$.

The inclusion "$\subseteq$" is trivial. Conversely, it suffices to show that for every $(d-1)$-face $H \subseteq F$ we have $H \in \langle G \, | \, G \in \mathcal{F}(\Delta), G \neq F \rangle$. Since the facet $F$ is essential, $F$ belongs to a $d-$cycle in $\Delta$ and so $\de F$ is a boundary in $\Delta \setminus \{F\}$, where $\de$ denotes the boundary map in the chain complex of $\Delta$. Then, since $H \in \de F$, there exists a facet $G \in \mathcal{F}(\Delta)$, $G \neq F$, such that $H \subseteq G$.

Since $F \in \mathcal{E}_d(\Delta,\K)$, by Proposition \ref{edel}, we have that $$\widetilde{\bn}_k(\Delta_1)=\begin{cases} \widetilde{\bn}_k(\Delta) -1 & \text{ if } k=d \\ \widetilde{\bn}_k(\Delta) & \text{ otherwise}
\end{cases}$$\\
Moreover, $\widetilde{\bn}_k(\Delta_2)=0$, for $k \in \mathbb{N}$.

If $d=0$, we have $\Delta_1\cap\Delta_2=\{\emptyset\}$ and we have nothing to prove. Assume $d \geq 1$. Then, $\Delta_1 \cap \Delta_2$ is homeomorphic to the $(d-1)$-sphere $\S^{d-1}$. Therefore, we have $\widetilde{\bn}_{d-1}(\Delta_1\cap\Delta_2)=1$ and $\widetilde{\bn}_k(\Delta_1\cap\Delta_2)=0$ for $k \neq d-1$. It is easy to check that the above Betti numbers satisfy the equations described in Definition \ref{def:homspli}. In fact,\\
$\widetilde{\bn}_k(\Delta_1)+\widetilde{\bn}_k(\Delta_2)+\widetilde{\bn}_{k-1}(\Delta_1 \cap \Delta_2)=\widetilde{\bn}_k(\Delta)+0+0=\widetilde{\bn}_k(\Delta)$ if $k \neq d$ and\\
$\widetilde{\bn}_d(\Delta_1)+\widetilde{\bn}_d(\Delta_2)+\widetilde{\bn}_{d-1}(\Delta_1 \cap \Delta_2)=\widetilde{\bn}_d(\Delta)-1+0+1=\widetilde{\bn}_d(\Delta)$.
\end{proof}

%\begin{oss}
%We remark that in the previous result, the dimension $d$ of the essential facet $F$ is {\em not} forced to be equal to the dimension of the simplicial complex $\Delta$.
%\end{oss}

%\begin{teor}\label{pro:essential} Let $\Delta$ be a simplicial complex of dimension $d$ and let $\K$ be a field. Then, the following conditions are equivalent:
%\begin{enumerate}
%\item[1.] $\widetilde{\bn}_d(\Delta; \K)\neq 0$;
%\item[2.] there exists a $d$-dimensional essential facet $F \in \mathcal{F}(\Delta)$ over $\K$.
%\end{enumerate}
%In this case, there exists a facet $F \in \mathcal{F}(\Delta)$ such that $\Delta=\langle G \in \mathcal{F}(\Delta):G \neq F\rangle \cup \langle F\rangle$ is a homology splitting of $\Delta$ over $\K$.
%\end{teor}

\begin{oss}\label{dcycle}
It is worth of mention that the homology splitting in Proposition \ref{pro:essential} is ensured for {\em every} facet of the $d$-cycle considered.
\end{oss}

\begin{oss}
Clearly in general the existence of a homology splitting $\Delta=\langle G \in \mathcal{F}(\Delta):G \neq F\rangle \cup \langle F\rangle$ over $\K$ induced by a $d$-dimensional facet $F \in \mathcal{F}(\Delta)$ does not imply $\widetilde{\beta}_d(\Delta;\K) \neq 0$ or $F \in \mathcal{E}_d(\Delta,\K)$. Consider for instance the decomposition $\langle 123,345\rangle \cup \langle 246\rangle$ of the complex in Example \ref{stella}. By Corollary \ref{goodinter}, it is a homology splitting over every field $\K$, but we have $\widetilde{\beta}_2(\Delta;\K)=0$ and clearly also $\mathcal{E}_d(\Delta,\K)=\emptyset$.
\end{oss}

The following Theorem is an immediate consequence of Lemma \ref{lem1} and Proposition \ref{pro:essential}.

\begin{teor}\label{teo:1} Let $\Delta$ be a simplicial complex of dimension $d$. If $\widetilde{\bn}_d( \Delta; \K)\neq 0$, then $\Delta$ admits a homology splitting over $\K$.
\end{teor}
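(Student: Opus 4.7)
The statement is a direct synthesis of the two results just established, so the plan is short: deduce the existence of an essential facet from the non-vanishing top homology, then invoke the construction attached to that facet.

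First I would apply Lemma \ref{lem1}. Since $\Delta$ has dimension $d$ and $\widetilde{\bn}_d(\Delta;\K)\neq 0$, the ``converse'' half of Lemma \ref{lem1} (which requires exactly the hypothesis $\dim(\Delta)=d$) guarantees that the set $\mathcal{E}_d(\Delta,\K)$ of $d$-dimensional essential facets is non-empty. It is important here that we are in top dimension; as Example \ref{stella} shows, in lower dimensions the non-vanishing of $\widetilde{\beta}_d(\Delta;\K)$ alone would not be enough to produce an essential facet.

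Next I would pick any facet $F\in\mathcal{E}_d(\Delta,\K)$ and form the standard decomposition
\[
\Delta=\langle G\in\mathcal{F}(\Delta):G\neq F\rangle\cup\langle F\rangle.
\]
By Proposition \ref{pro:essential}, this decomposition is automatically a homology splitting of $\Delta$ over $\K$, because $F$ lies in a $d$-cycle. Hence $\Delta$ admits a homology splitting over $\K$, which is exactly the conclusion.

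There is essentially no obstacle here beyond invoking the correct direction of Lemma \ref{lem1}: all the real work has been absorbed into Proposition \ref{edel}, Lemma \ref{lem1}, and Proposition \ref{pro:essential}. The only subtlety worth flagging in the write-up is the role of the top-dimension assumption $\dim(\Delta)=d$, since without it the implication ``$\widetilde{\bn}_d(\Delta;\K)\neq0\Rightarrow\mathcal{E}_d(\Delta,\K)\neq\emptyset$'' fails.
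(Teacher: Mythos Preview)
Your proposal is correct and matches the paper's approach exactly: the paper states that the theorem is an immediate consequence of Lemma \ref{lem1} and Proposition \ref{pro:essential}, which is precisely the two-step argument you outline. Your added remark about the necessity of the top-dimension hypothesis (via Example \ref{stella}) is a nice touch beyond what the paper writes for this proof.
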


This result is useful because suggest us that pathological examples of simplicial complexes that {\em do not admit Betti splitting} could be found more easily among simplicial complexes with {\em trivial top homology}, see Section \ref{sec:examples}.
%-------------------------------------------------------------------------------------------------------------------------------------------------
\section{Betti splitting of manifolds}\label{manifolds}
In this section, we apply our results to triangulations of manifolds, pointing out that our topological approach yields new results and examples, see also Section \ref{sec:examples}. Let us recall briefly some basic definitions.

\begin{defin}
A {\em topological $d$-manifold} $M$ is a Hausdorff space such that every point $x\in X$ has a neighbourhood which is homeomorphic to the $d$-dimensional Euclidean space.
\end{defin}

{\em If not differently stated, all the manifolds that we consider are compact and without boundary}. A {\em triangulation} of a manifold $M$ is a simplicial complex $\Delta$ such that $|\Delta| \cong M$. In the following, we will simply say that $\Delta$ itself is a {\em $d$-manifold}. We summarize some properties of a $d$-manifold $\Delta$:
%and {\em strongly connected}  and for every two facets $F,G$ there exists a path $H_0=F,H_1,\cdots,H_k,H_{k+1}=G$, where $H_i$ are facets such that $H_i \cap H_{i+1}$ is a $(d-1)$-dimensional face of $\Delta$, for every $1 \leq i \leq k$
\begin{itemize}
\item $\Delta$ is {\em pure} (all the facets of $\Delta$ have the same dimension $d$);
\item $\Delta$ is a {\em pseudomanifold} (every  $(d-1)$-dimensional face lies in exactly two facets);
\item $\link_{\Delta}v$ is a $(d-1)$-dimensional {\em homology sphere} (a topological manifold with the same homology of the sphere, for every field $\K$).
\end{itemize}

The study of Betti splittings of manifolds is interesting to relate topological properties on $\Delta$ and existence of particular standard decompositions on $\Delta$, not to compute the graded Betti numbers of $I_{\Delta}^*$, because they are essentially known, see Remark \ref{fvect}.

\begin{oss}\label{fvect}
Let $\Delta$ be a $d$-manifold. In this case $I_{\Delta}^*$ is generated in degree $n-d-1$.

The graded Betti numbers of $I_{\Delta}^*$ are essentially known. In fact $$\beta_i(I_{\Delta}^*)=f_{d-i}(\Delta)+\widetilde{\beta}_{i-1}(\Delta;\K),$$ for $0 \leq i \leq d+1$, where $(f_{-1}(\Delta),f_0(\Delta),\cdots,f_d(\Delta))$ is the {\em f-vector} of $\Delta$ and $f_i(\Delta)$ denotes the number of $i$-dimensional faces of $\Delta$. The equality above follows immediately from Hochster's formula, since $\link_{\Delta}(F)$ is a $(j-1)$-dimensional homology sphere, for every face $F \neq \emptyset$, $F \in \Delta$ and $|F|=n-j$.
\end{oss}

In the next result, we prove that for manifolds, the Betti splitting condition is equivalent to the homology splitting condition, if we assume that the intersection is a manifold as well.

\begin{teor}\label{teo:3} Let $d \geq 1$ and $\Delta$ be a connected $d$-manifold. Assume that $\Delta=\Delta_1 \cup \Delta_2$ is a standard decomposition of $\Delta$ such that $\Delta_1 \cap \Delta_2$ is a $(d-1)$-manifold. Then the following statements are equivalent:
\begin{itemize}
\item[$\mathrm{(1)}$] $\Delta=\Delta_1 \cup \Delta_2$ is a homology splitting of $\Delta$ over $\K$;
\item[$\mathrm{(2)}$] $\Delta=\Delta_1 \cup \Delta_2$ is a Betti splitting of $\Delta$ over $\K$.
\end{itemize}
\end{teor}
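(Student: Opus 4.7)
The direction $(2) \Rightarrow (1)$ is immediate from the remark after Definition \ref{def:homspli}, so the content lies in $(1) \Rightarrow (2)$. My plan is to invoke Theorem \ref{teor:splitforcomplex} and verify its condition $(2)$: that for every face $F \in \Delta_1 \cap \Delta_2$, the induced decomposition $\link_\Delta F = \link_{\Delta_1} F \cup \link_{\Delta_2} F$ is a homology splitting over $\K$. The case $F = \emptyset$ is exactly hypothesis $(1)$, since $\link_\Delta \emptyset = \Delta$; the remaining cases $F \neq \emptyset$ will follow from the manifold structure alone, independently of $(1)$.

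For such $F$, set $m = d - |F|$, $S = \link_\Delta F$, $S_i = \link_{\Delta_i} F$, and $T = \link_{\Delta_1 \cap \Delta_2} F = S_1 \cap S_2$. The manifold hypotheses give $S$ an $m$-dimensional homology sphere and $T$ an $(m-1)$-dimensional homology sphere. The case $m = 0$ reduces to $T = \{\emptyset\}$, for which the splitting is trivial by definition. For $m \geq 1$, Proposition \ref{hom} reduces the homology splitting of $S = S_1 \cup S_2$ to the vanishing of all Mayer-Vietoris maps $\phi_k : \widetilde{H}_k(T) \to \widetilde{H}_k(S_1) \oplus \widetilde{H}_k(S_2)$. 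Since $T$ has vanishing reduced homology except in degree $m-1$, only $\phi_{m-1}$ is in question, and by exactness, $\phi_{m-1} = 0$ is equivalent to the connecting homomorphism $\delta_m : \widetilde{H}_m(S) \to \widetilde{H}_{m-1}(T)$ being an isomorphism $\K \to \K$.

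The crux of the argument is then the vanishing $\widetilde{H}_m(S_i) = 0$ for $i = 1, 2$, which makes $\delta_m$ injective (by exactness at $\widetilde{H}_m(S)$) and hence an isomorphism. To prove this, I would use the pseudomanifold structure: since $F \in \Delta_1 \cap \Delta_2$, each $S_i$ contains at least one $m$-facet (coming from some $d$-facet of $\Delta_i$ through $F$), but the facets of $S$ are partitioned between $S_1$ and $S_2$, so each $S_i$ is a proper subcomplex of $S$. Because $\widetilde{H}_m(S) = \K$ is generated by the fundamental cycle $[S]$, supported with nonzero coefficient on every $m$-facet of $S$, any $m$-cycle of $S_i$ lies in $Z_m(S) = \K \cdot [S]$ and so must be a scalar multiple of $[S]$ supported on a proper subset of facets, forcing the scalar to be zero.

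The main technical obstacle is ensuring that $\link_\Delta F$ and $\link_{\Delta_1 \cap \Delta_2} F$ are indeed homology spheres of the advertised dimensions for general faces $F$; this iterated-link statement follows from the vertex-link property of manifolds by induction on $|F|$. A related subtle point is the existence of $[S]$ with nonzero coefficient on every $m$-facet, which is essentially the $\K$-orientability of the homology sphere $S$ and is therefore automatic over any field.
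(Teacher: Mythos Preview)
Your argument is correct and takes a genuinely different route from the paper's. The paper proves $(1)\Rightarrow(2)$ by induction on $d$ via condition~$(3)$ of Theorem~\ref{teor:splitforcomplex}: it argues that for each vertex $v\in\Delta_1\cap\Delta_2$ the decomposition of $\link_\Delta v$ is a homology splitting (using that $\link_{\Delta_i}v$ is $\K$-acyclic, since $v$ lies on the boundary of $\Delta_i$, while $\link_{\Delta_1\cap\Delta_2}v$ is a homology $(d-2)$-sphere), and then appeals to the inductive hypothesis to upgrade this to a Betti splitting of the vertex link. You instead verify condition~$(2)$ of Theorem~\ref{teor:splitforcomplex} directly for \emph{every} nonempty face $F\in\Delta_1\cap\Delta_2$, with no induction on $d$: the homology-sphere structure of $S=\link_\Delta F$ and $T=\link_{\Delta_1\cap\Delta_2}F$, together with the pseudomanifold property forcing $Z_m(S)=\K\cdot[S]$ to be supported on all top facets, gives $\widetilde H_m(S_i)=0$ and hence that $\delta_m$ is an isomorphism.

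Your approach has a concrete advantage: the paper's inductive step tacitly requires $\link_\Delta v$ and $\link_{\Delta_1\cap\Delta_2}v$ to again be \emph{manifolds} in order to feed them back into the hypothesis of the theorem, and the paper's line ``hence it is a $(d-2)$-manifold'' is not literally justified by the homology-sphere property alone. Your argument sidesteps this by needing only that links of faces in a manifold are homology spheres (and pseudomanifolds), both of which are automatic. On the other hand, the paper's acyclicity claim for $\link_{\Delta_i}v$ is a slightly stronger intermediate conclusion than your $\widetilde H_m(S_i)=0$; but for the purpose at hand your vanishing in top degree is exactly what is needed, and your fundamental-class argument supplies it cleanly.
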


\begin{proof}
Since a Betti splitting is always a homology splitting, it is enough to prove that $\mathrm{(1)}$ implies $\mathrm{(2)}$. We prove the claim by induction on the dimension $d$ of the manifold $\Delta$. For $d=1$, $\Delta$ is a cycle graph. Since $\Delta=\Delta_1 \cup \Delta_2$ is a homology splitting, $\Delta_1$ and $\Delta_2$ are paths. Moreover $\Delta_1 \cap \Delta_2$ is given by a pair of points and the statement follows easily.

Assume the implication proved up to dimension $k$ and we prove it for dimension $d=k+1$. Let $v$ be a vertex of $\Delta_1\cap \Delta_2$. Notice that $\link_{\Delta_1}v$ and $\link_{\Delta_2}v$ are $\K$-acyclic, because $v \in \partial(\Delta_1) \cap \partial(\Delta_2)$, where $\partial$ denotes the boundary.

Recall that $\link_\Delta v$ is a $k$-dimensional homology sphere. By our assumption, $$\link_{\Delta_1} v \cap \link_{\Delta_2} v=\link_{\Delta_1 \cap \Delta_2} v$$ is a homology sphere as well, hence it is a $(d-2)$-manifold.
It is clear that $\link_\Delta v=\link_{\Delta_1} v \cup \link_{\Delta_2} v$ is a standard decomposition of $\link_{\Delta}(v)$. By our considerations, this is a homology splitting of $\link_\Delta v$ over $\K$.  Then by induction, $\link_\Delta v=\link_{\Delta_1} v \cup \link_{\Delta_2} v$ is also a Betti splitting of $\link_\Delta v$. Hence, by Theorem \ref{teor:splitforcomplex}, we have that the homology splitting $\Delta=\Delta_1\cup\Delta_2$ is a Betti splitting of $\Delta$.
\end{proof}

Noticing that the removal of every facet from a $d$-manifold yields to a natural standard decomposition, the following result is an immediate corollary of Theorem \ref{teo:3}.

\begin{coroll}\label{cor:3} Let $\Delta$ be a connected $d$-manifold. Given a facet $F \in \mathcal{F}(\Delta)$, the following statements are equivalent:
\begin{itemize}
\item[$\mathrm{(1)}$] $\Delta=\langle G| G \in \mathcal{F}(\Delta), G \neq F \rangle \cup \langle F\rangle$ is a homology splitting of $\Delta$ over $\K$;
\item[$\mathrm{(2)}$] $\Delta=\langle G| G \in \mathcal{F}(\Delta), G \neq F \rangle \cup \langle F\rangle$ is a Betti splitting of $\Delta$ over $\K$.
\end{itemize}
\end{coroll}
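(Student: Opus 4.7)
The plan is to reduce Corollary \ref{cor:3} to Theorem \ref{teo:3} by checking its single nontrivial hypothesis. Since a Betti splitting is by definition a homology splitting, the implication $(2) \Rightarrow (1)$ is automatic, and only $(1) \Rightarrow (2)$ needs work. Setting $\Delta_1 := \langle G \mid G \in \mathcal{F}(\Delta),\ G \neq F \rangle$ and $\Delta_2 := \langle F \rangle$, what has to be verified is that $\Delta_1 \cap \Delta_2$ is a $(d-1)$-manifold; once this is in place, Theorem \ref{teo:3} applies verbatim.

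First I would note that $\Delta = \Delta_1 \cup \Delta_2$ is a standard decomposition, since the facets of $\Delta$ partition as $\{F\} \sqcup (\mathcal{F}(\Delta) \setminus \{F\})$. Then I would identify the intersection explicitly. Because $\Delta_2 = \langle F \rangle$ is the full simplex on the vertices of $F$, we have $\Delta_1 \cap \Delta_2 = \{H \subseteq F : H \in \Delta_1\}$. The pseudomanifold property of a $d$-manifold forces every $(d-1)$-face $\tau \subset F$ to lie in exactly one facet of $\Delta$ other than $F$; hence every $(d-1)$-face of $F$ belongs to $\Delta_1$, and by downward closure so does every face of $F$ of dimension at most $d-1$. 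The only face of $F$ missing from $\Delta_1$ is $F$ itself, and therefore
\[
\Delta_1 \cap \Delta_2 \;=\; \partial \langle F\rangle \;\cong\; \S^{d-1},
\]
which is plainly a $(d-1)$-manifold.

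With the intersection identified as a combinatorial $(d-1)$-sphere, the hypotheses of Theorem \ref{teo:3} are satisfied for $d \geq 1$ (the degenerate case $d=0$, where $\Delta$ is a single point, is trivial), and the implication $(1) \Rightarrow (2)$ follows immediately. The only conceptual step is the pseudomanifold observation that pins down $\Delta_1 \cap \Delta_2$ as $\partial \langle F \rangle$; everything else is bookkeeping, so I do not expect any real obstacle here.
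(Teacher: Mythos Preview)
Your proposal is correct and follows exactly the route the paper intends: the paper states only that Corollary~\ref{cor:3} is ``an immediate corollary of Theorem~\ref{teo:3}'' after noting that removing a facet from a $d$-manifold gives a natural standard decomposition, and you have supplied precisely the missing verification---via the pseudomanifold property---that $\Delta_1 \cap \Delta_2 = \partial\langle F\rangle \cong \S^{d-1}$ is a $(d-1)$-manifold, so Theorem~\ref{teo:3} applies.
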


In the last part of this section, we focus on orientable manifolds.

A $d$-manifold $M$ is called {\em orientable} if it has a global consistent choice of orientation. For the formal definition and other details on orientability, we refer to \cite[Chapter 3.3]{hatcher}. %Examples of orientable manifolds are (homology) spheres, see \cite[Corollary 3.28]{hatcher}. The M\"{o}bius strip and the Klein bottle are examples of non-orientable spaces.
%Recall that $\beta_k(\Delta;\K)=\widetilde{\bn}_k(\Delta;\K)$ for $k \neq 0$ and $\beta_0(\Delta;\K)=\widetilde{\bn}_0(\Delta;\K)+1$.

Poincar\'{e} Duality is a fundamental result on the topology of orientable $d$-manifold. We state it in a classical fashion (for a modern treatment see \cite[Theorem 3.30]{hatcher}). Recall that with coefficients in a field, homology and cohomology are isomorphic, see \cite[Theorem 45.8]{munkres}.

\begin{teor}{\em (Poincar\'{e} Duality)}.\label{thm:poi}
Let $\Delta$ be an orientable $d$-manifold. Then, for any integer $k$ and any field $\K$, $$H_k(\Delta;\K)\cong H_{d-k}(\Delta;\K),$$ where $H_k(\Delta;\K)$ denotes the $k^{th}$ {\em non}-{\em reduced} homology group of $\Delta$ with coefficients in $\K$.\end{teor}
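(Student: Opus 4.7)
The plan is to follow the classical cap product proof, adapted to the simplicial setting. The key ingredient is a fundamental class $[\Delta] \in H_d(\Delta;\K)$, which exists because $\Delta$ is a closed orientable $d$-manifold: the chosen orientation determines a consistent $\K$-coefficient on each $d$-simplex, and orientability guarantees that the resulting top chain is a cycle. Cap product with $[\Delta]$ then produces a map
$$D_k : H^k(\Delta;\K) \longrightarrow H_{d-k}(\Delta;\K), \qquad D_k(\alpha) = \alpha \frown [\Delta],$$
and the main task is to prove $D_k$ is an isomorphism for every integer $k$. Since we are working over a field, the universal coefficient theorem identifies $H^k(\Delta;\K)$ with $H_k(\Delta;\K)$, which yields the stated isomorphism.

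To show $D_k$ is an isomorphism, I would use the dual block decomposition on the barycentric subdivision $\mathrm{sd}(\Delta)$. For each $k$-simplex $\sigma \in \Delta$, let $D(\sigma)$ be the subcomplex of $\mathrm{sd}(\Delta)$ spanned by barycenters of chains $\sigma \subsetneq \tau_1 \subsetneq \cdots \subsetneq \tau_m$. Using that $\link_\Delta \sigma$ is a $(d-k-1)$-dimensional homology sphere (as recalled in Section \ref{manifolds}) together with a cone argument, one checks that $D(\sigma)$ is homeomorphic to a closed $(d-k)$-ball whose boundary is the union of the dual blocks $D(\tau)$ with $\sigma \subsetneq \tau$ and $\dim \tau = k+1$. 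These blocks assemble into a regular CW decomposition of $|\Delta|$, and orientability furnishes compatible orientations on the dual cells. One then verifies that the resulting cellular chain complex is isomorphic to the shifted simplicial cochain complex $C^{d-\bullet}(\Delta;\K)$, either directly through a dimension-by-dimension check of incidence numbers, or abstractly via the cap product formalism after observing that $D_k$ is realized at the chain level by the dualization map.

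The technical heart of the argument is the construction of the dual blocks and the sign bookkeeping that guarantees the duality between the cellular boundary and the simplicial coboundary is natural and independent of all choices of orientation on individual simplices. Over characteristic two these signs disappear and orientability is automatic, which is consistent with the paper's observation (Proposition \ref{pro:essManZ2}) that the orientability hypothesis can be dropped when $\chara(\K)=2$. For the applications in this paper one really only needs the \emph{consequence} $\widetilde{\beta}_k(\Delta;\K) = \widetilde{\beta}_{d-k}(\Delta;\K)$ for $0 < k < d$ together with $\widetilde{\beta}_d(\Delta;\K)=1$, so an alternative lightweight route would be to invoke Poincar\'e duality directly from \cite{hatcher} and skip the CW construction altogether.
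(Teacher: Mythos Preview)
Your sketch is correct in outline --- the dual block / cap product argument is the standard route and would work here --- but it is worth noting that the paper does not prove Theorem~\ref{thm:poi} at all: it is stated as a classical result with a pointer to \cite[Theorem 3.30]{hatcher} and \cite{munkres} for the identification of homology and cohomology over a field. So your ``alternative lightweight route'' at the end of the proposal is in fact exactly what the paper does. The dual-cell construction you describe is essentially the proof one would find by following those references, so there is no substantive divergence; you have simply unpacked what the paper leaves as a citation.
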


\begin{oss}\label{oss:poi2}
For every $d$-manifold, the isomorphism of Theorem \ref{thm:poi} holds for $\K=\Z_2$.
\end{oss}

Using Poincar\'e duality, we prove that the facets of an orientable manifold are all essential.

\begin{pro}\label{pro:essMan} Let $\Delta$ be an orientable manifold. Then $\mathcal{F}(\Delta)=\mathcal{E}_d(\Delta,\K)$, for every field $\K$. Moreover $\Delta=\langle G| G \in \mathcal{F}(\Delta), G \neq F \rangle \cup \langle F\rangle$ is a Betti splitting over $\K$, for every $F \in \Delta$.
\end{pro}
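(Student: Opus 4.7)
The plan is to first show that under orientability every facet belongs to some $d$-cycle, and then combine Proposition~\ref{pro:essential} with Corollary~\ref{cor:3} to upgrade the resulting homology splitting to a Betti splitting.

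For the claim $\mathcal{F}(\Delta)=\mathcal{E}_d(\Delta,\K)$, I would invoke the fundamental class of an orientable closed pseudomanifold. A consistent global orientation assigns signs $\epsilon_F \in \{+1,-1\}$ to every facet $F \in \mathcal{F}(\Delta)$ such that the $d$-chain $z_{\Delta}:=\sum_{F \in \mathcal{F}(\Delta)} \epsilon_F \, F$ satisfies $\partial z_{\Delta}=0$: the pseudomanifold property (every $(d-1)$-face lies in exactly two facets) together with the compatibility of the orientations at shared codimension-one faces forces the two contributions at every such face to cancel. Since all coefficients of $z_{\Delta}$ are nonzero and this persists after reduction mod the characteristic of any field $\K$, every facet belongs to a $d$-cycle, and so $\mathcal{F}(\Delta)=\mathcal{E}_d(\Delta,\K)$.

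For the Betti splitting statement, fix $F \in \mathcal{F}(\Delta)$ and set $\Delta_1:=\langle G \in \mathcal{F}(\Delta) : G \neq F \rangle$ and $\Delta_2:=\langle F \rangle$. By the previous step $F$ is essential, so Proposition~\ref{pro:essential} gives that $\Delta=\Delta_1 \cup \Delta_2$ is a homology splitting of $\Delta$ over $\K$. Moreover $\Delta_1 \cap \Delta_2=\partial F \cong \S^{d-1}$ is a $(d-1)$-manifold, so Corollary~\ref{cor:3} applies (treating separately the connected component containing $F$, on which the decomposition is nontrivial, while the other components contribute trivially to both the homology and the Betti splitting formulas) and yields that $\Delta=\Delta_1 \cup \Delta_2$ is a Betti splitting of $\Delta$ over $\K$.

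The main obstacle is the construction of an integer fundamental cycle $z_{\Delta}$ with all nonzero coefficients. This is the classical feature distinguishing orientable from non-orientable manifolds: without it one only has a $\Z_2$-fundamental class, so essential facets over every field are no longer guaranteed, which is precisely the reason characteristic~$2$ must be singled out in the companion Proposition~\ref{pro:essManZ2}. Once orientability delivers $z_{\Delta}$, the rest is a direct application of Proposition~\ref{pro:essential} and Corollary~\ref{cor:3}.
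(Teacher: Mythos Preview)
Your proof is correct and follows the same overall architecture as the paper's: establish that every facet is essential, then invoke Proposition~\ref{pro:essential} together with Corollary~\ref{cor:3}. The only difference is in the first step. The paper appeals to Poincar\'{e} duality (Theorem~\ref{thm:poi}) to deduce $H_d(\Delta;\K)\cong H_0(\Delta;\K)\neq 0$ and then observes that the supporting $d$-cycles are the connected components themselves, so every facet lies in one; you instead build the fundamental cycle $z_{\Delta}=\sum_F \epsilon_F F$ directly from the orientation and the pseudomanifold condition. These are two phrasings of the same fact (existence of a fundamental class with full support), and your version is arguably more self-contained since it avoids citing the full strength of Poincar\'{e} duality. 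Your extra remark handling the non-connected case component by component is a nice touch, since Corollary~\ref{cor:3} is stated only for connected $\Delta$.
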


\begin{proof} Poincar\'{e} Duality (Theorem \ref{thm:poi}) ensures that $H_d(\Delta; \K)\cong H_0(\Delta;\K) \neq 0$. Furthermore, the geometrical realizations of the $d$-cycles of $H_d(\Delta; \K)$ coincide with the connected components of $\Delta$ itself. So, given any facet $F$ in $\Delta$, $F$ belongs to a $d$-cycle of $\Delta$. Then, by Proposition \ref{pro:essential} and Corollary \ref{cor:3} we conclude.\end{proof}

By Remark \ref{oss:poi2} and Proposition \ref{pro:essMan}, we obtain the following result.

\begin{pro}\label{pro:essManZ2}
Let $\Delta$ be a manifold. Then $\Delta=\langle G| G \in \mathcal{F}(\Delta), G \neq F \rangle \cup \langle F\rangle$ is a Betti splitting over $\Z_2$, for every $F \in \mathcal{F}(\Delta)$.
\end{pro}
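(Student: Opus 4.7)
The plan is to mimic the proof of Proposition \ref{pro:essMan} essentially verbatim, with Remark \ref{oss:poi2} replacing the orientability assumption. First, I would invoke Remark \ref{oss:poi2}, which tells us that Poincar\'e Duality holds over $\Z_2$ for \emph{every} manifold, regardless of orientability. Applied connected-component by connected-component, this yields $H_d(\Delta;\Z_2)\cong H_0(\Delta;\Z_2)\neq 0$, and each nonzero class is realized by the sum (mod $2$) of all $d$-faces of a connected component. In particular, every facet $F\in\mathcal{F}(\Delta)$ appears in such a $d$-cycle, so $\mathcal{F}(\Delta)=\mathcal{E}_d(\Delta,\Z_2)$.

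Second, having shown that every facet is essential over $\Z_2$, Proposition \ref{pro:essential} directly gives that the standard decomposition $\Delta=\langle G\mid G\in\mathcal{F}(\Delta),\, G\neq F\rangle\cup\langle F\rangle$ is a homology splitting of $\Delta$ over $\Z_2$ for every $F\in\mathcal{F}(\Delta)$.

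Finally, I would promote this homology splitting to a Betti splitting by applying Corollary \ref{cor:3}. If $\Delta$ is connected, this is immediate. Otherwise, let $\Delta'$ be the connected component of $\Delta$ containing $F$ and write $\Delta=\Delta'\sqcup\Delta''$: Corollary \ref{cor:3} applies to $\Delta'$ with respect to the chosen $F$, and the remaining components $\Delta''$ occur identically inside both $\Delta$ and the first piece of the decomposition without meeting $\langle F\rangle$, so their contribution to the graded Betti numbers cancels from the Betti splitting equation in the obvious way and the formula extends to all of $\Delta$.

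The main obstacle is essentially nil: the result is a direct combination of Remark \ref{oss:poi2}, Proposition \ref{pro:essential}, and Corollary \ref{cor:3}, and the only mildly delicate point, namely the possibility that $\Delta$ is disconnected, is handled by the componentwise reduction above.
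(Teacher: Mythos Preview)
Your proposal is correct and follows essentially the same route as the paper: the paper simply states that the result follows from Remark \ref{oss:poi2} and Proposition \ref{pro:essMan}, i.e.\ one reruns the proof of Proposition \ref{pro:essMan} with $\Z_2$-Poincar\'e duality in place of orientability. Your extra componentwise reduction for the disconnected case is a nice bit of care that the paper glosses over (it applies Corollary \ref{cor:3} without comment), but it does not change the underlying argument.
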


In the following example, we show that Proposition \ref{pro:essManZ2} does not hold for manifolds with boundary.

\begin{example}
It can be proved that the standard triangulation of Rudin's ball does not admit a Betti splitting obtained removing a single facet. However, it admits another kind of decomposition that is a Betti splitting, see \cite[Example 4.7]{bolo}.
\end{example}

Finally, we characterize orientability of manifolds in terms of existence of Betti splitting induced by the removal of a single facet.

\begin{teor}\label{viceversa}
Let $d \geq 1$. For a $d$-manifold $\Delta$, the following are equivalent:
\begin{enumerate}
\item[$\mathrm{(1)}$] $\Delta$ is orientable;
\item[$\mathrm{(2)}$] $\Delta=\langle G| G \in \mathcal{F}(\Delta), G \neq F \rangle \cup \langle F\rangle$ is a Betti splitting over every field $\K$, for every facet $F \in \mathcal{F}(\Delta)$.
\item[$\mathrm{(3)}$] $\Delta=\langle G| G \in \mathcal{F}(\Delta), G \neq F \rangle \cup \langle F\rangle$ is a Betti splitting over every field $\K$, for some facet $F \in \mathcal{F}(\Delta)$.
\end{enumerate}
\end{teor}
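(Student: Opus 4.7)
The plan is to verify the cycle $(1) \Rightarrow (2) \Rightarrow (3) \Rightarrow (1)$. Implication $(1) \Rightarrow (2)$ is exactly Proposition \ref{pro:essMan}, while $(2) \Rightarrow (3)$ is trivial (take any facet of $\Delta$). The substantive content is the converse implication $(3) \Rightarrow (1)$, which I would handle as follows.

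Assume that for some facet $F \in \mathcal{F}(\Delta)$ the decomposition $\Delta = \Delta_1 \cup \Delta_2$, with $\Delta_1 = \langle G \in \mathcal{F}(\Delta) : G \neq F\rangle$ and $\Delta_2 = \langle F\rangle$, is a Betti splitting over every field. The first step is to pass to the topological side: as observed after Definition \ref{def:homspli}, every Betti splitting is a homology splitting, so the equation of Definition \ref{def:homspli} holds over every $\K$. Next I would unpack the easy terms. The simplex $\Delta_2 = \langle F\rangle$ is contractible, so $\widetilde{\beta}_k(\Delta_2;\K)=0$ for all $k$. Because $\Delta$ is a closed $d$-manifold, every $(d-1)$-face of $F$ lies in exactly one other facet, hence in $\Delta_1$, so $\Delta_1 \cap \Delta_2 = \partial F \cong S^{d-1}$; in particular $\widetilde{\beta}_{d-1}(\Delta_1\cap\Delta_2;\K)=1$ and all other reduced Betti numbers of $\Delta_1\cap\Delta_2$ vanish.

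Substituting these values into the homology splitting formula at $k=d$ yields
$$\widetilde{\beta}_d(\Delta;\K) \;=\; \widetilde{\beta}_d(\Delta_1;\K) + 1 \;\geq\; 1,$$
valid over every field $\K$; in particular $\widetilde{\beta}_d(\Delta;\Qq) \neq 0$. The final step is to promote this algebraic condition to the geometric statement of orientability. For a connected closed $d$-manifold, Poincar\'e Duality (Theorem \ref{thm:poi}) gives $H_d(\Delta;\Qq) \cong H_0(\Delta;\Qq)$ when $\Delta$ is orientable, while the top rational homology vanishes when $\Delta$ is non-orientable; hence $\widetilde{\beta}_d(\Delta;\Qq) \neq 0$ forces $\Delta$ to be orientable, closing the cycle.

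I expect the main obstacle to be the last step: converting the non-vanishing of the single algebraic invariant $\widetilde{\beta}_d(\Delta;\Qq)$ into the geometric conclusion of orientability, invoking Poincar\'e duality in the opposite direction from Proposition \ref{pro:essMan}. The preceding steps are essentially bookkeeping, relying on the very restricted topology of $\Delta_2$ and $\Delta_1\cap\Delta_2$ that arises from removing a single facet of a closed manifold and on Proposition \ref{edel} as distilled through the homology splitting formalism.
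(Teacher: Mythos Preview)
Your proposal is correct and follows essentially the same route as the paper: both cite Proposition \ref{pro:essMan} for $(1)\Rightarrow(2)$, note $(2)\Rightarrow(3)$ is trivial, and for $(3)\Rightarrow(1)$ use that $\Delta_1\cap\Delta_2=\partial F\cong\S^{d-1}$ together with the homology splitting equation at $k=d$ to force $\widetilde{\beta}_d(\Delta;\K)\geq 1$, contradicting the vanishing of top homology for a non-orientable closed manifold over a field of characteristic $\neq 2$. The only cosmetic difference is that the paper phrases the last step as a contradiction over an arbitrary field with $\chara(\K)\neq 2$, whereas you specialize to $\K=\Qq$; either choice suffices.
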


\begin{proof}
By Proposition \ref{pro:essMan}, $\mathrm{(1)}$ implies $\mathrm{(2)}$. Clearly $\mathrm{(2)}$ implies $\mathrm{(3)}$. Then we have to prove only that $\mathrm{(3)}$ implies $\mathrm{(1)}$. 

Let $\K$ be a field such that $\chara(\K) \neq 2$. By contradiction $\Delta$ is not orientable, then $\widetilde{H}_d(\Delta;\K)=0$. But this cannot be, because $\beta_{d-1}(\langle G| G \in \mathcal{F}(\Delta), G \neq F \rangle \cap \langle F\rangle)=1$ and hence $\beta_d(\Delta)>0$.
\end{proof}
%-------------------------------------------------------------------------------------------------------------------------------------------------
\section{Applications}\label{sec:examples}
The results of the previous sections describe a large class of discretized topological spaces admitting a Betti splitting. In this section, we present several interesting examples that do not admit Betti splitting and a general framework to find these kind of examples, taking advantage of our topological approach.

\begin{example} (Orientable Manifolds)
Theorem \ref{viceversa} ensures us that {\em every} triangulation of an orientable manifold, for instance an $n$-sphere $\S^n$, a torus $T$ and a projective plane of odd dimension $\RP^{2n+1}$, admits a Betti splitting decomposition over every field $\K$, induced by the removal of any of its top dimensional simplices. Another interesting example is provided by the triangulation $\Delta$ given in \cite{lutz} of the lens space $L(3,1)$ (for more details see Example \ref{len}).
\end{example}

\begin{example} (Non-Orientable Manifolds)
In view of Theorem \ref{viceversa}, we know that every triangulation of some relevant non-orientable manifolds, such as a projective space of even dimension $\RP^{2n}$ and the Klein bottle $K$ do not admit Betti splitting induced by the removal of a single facet if $\mathrm{char}(\K) \neq 2$. Indeed, we are able to show that suitable triangulations of these spaces (see Figure \ref{fig:examples}(a) and Figure \ref{fig:examples}(b)) do not admit {\em any} Betti splitting over a field $\K$ with $\mathrm{char}(\K) \neq 2$. This extends \cite[Example 4.1]{ur}.
\end{example}

\begin{example} (Moore space)
Theorem \ref{teo:1} ensures the existence of a homology splitting also for {\em non-manifold} simplicial complexes. Consider the mod 3 Moore space $M$ \cite{connon} depicted in Figure \ref{fig:examples}(c). $M$ is a 2-dimensional simplicial complex with $\widetilde{\beta_2}(M;\Z_3) \neq 0$. By Theorem \ref{teo:1}, it admits a homology splitting over $\Z_3$. In this case, all the facets of $M$ induce a homology splitting over $\Z_3$; it can be easily proved that it is also a Betti splitting. The situation is completely different if $\chara(\K) \neq 3$.
\end{example}

\begin{example} (Dunce hat)
For the dunce hat $D$, see Figure \ref{fig:examples}(d) the pathology is similar to the case of non-orientable manifolds, but {\em over every field}. The first author already proved in \cite{bolo} that the given triangulation of the dunce hat does not admit Betti splitting.
\end{example}

The spaces considered and their topological properties are summarized in Table \ref{tab:examples}.

\begin{table}[!htb]
    \centering
    \caption{Relevant properties of the considered simplicial complexes.} %\textcolor{red}{Ha senso inserire anche $\RP^{2n+1}$?}}
        \label{tab:examples}
        \vspace{.5em}
    %\resizebox{.85\columnwidth}{!}{
    \begin{tabular}{|c||c|c|c|}
      \hline
        \multirow{2}{*}{$\Delta$} & \multirow{2}{*}{Manifold} & \multirow{2}{*}{Orientable}  & \multirow{2}{*}{$\K$ s.t. $\widetilde{\beta}_d(\Delta;\K)\neq0$}  \\
                &       &  &        \\\hhline{|=#=|=|=|}

    %  $\S^{n}$
    %            & \ding{51}
    %            & \ding{51}
    %            & any $\F$
    %            \\ \hline
     %
    %  $T$
    %            & \ding{51}
    %            & \ding{51}
    %            & any $\F$
    %            \\ \hline

     $\RP^{2n+1}$
              & \ding{51}
              & \ding{51}
              & any $\K$
              \\ \hline
              
     $L(3,1)$
               & \ding{51}
               & \ding{51}
               & any $\K$
               \\ \hline

     $\RP^{2n}$
               & \ding{51}
               & \ding{55}
               & $\K$ with $\chara(\K)=2$
               \\ \hline

     $K$
              & \ding{51}
              & \ding{55}
              & $\K$ with $\chara(\K)=2$
              \\ \hline

     $M$
              & \ding{55}
              & $-$
              & $\K$ with $\chara(\K)=3$
              \\ \hline

     $D$
              & \ding{55}
              & $-$
              & none
              \\ \hline
    \end{tabular}
    %}

% altri check: \checkmark   \ding{52}   USATO: \ding{51}
% altre x: \ding{54}   \ding{56}   $\times$  USATO: \ding{55}

\end{table}

The notion that we are going to introduce is inspired by Theorem \ref{teo:1}. It allows us to detect a large class of simplicial complexes that do not admit Betti splitting, i.e. every possible standard decomposition is not a Betti splitting.

\begin{defin}\label{def:non-trivially}
  Let $\K$ be a field and let $\Delta$ be a simplicial complex of dimension $d$. $\Delta$ is called {\em trivially decomposable} over $\K$ if there exists a standard decomposition $\Delta_1\cup \Delta_2$ of $\Delta$ such that $\widetilde{\beta}_{d-1}(\Delta_1 \cap \Delta_2; \K)=0$.
\end{defin}

As an immediate consequence of Definition \ref{def:homspli}, we can state the following proposition. It is extremely useful to construct pathological examples of ideals that do not admit Betti splitting.

\begin{pro}\label{pro:necessary}
Let $\Delta$ be a simplicial complex of dimension $d \geq 2$ with $\widetilde{\beta}_d(\Delta;\K)=0$. 

If $\Delta$ admits a homology splitting over $\K$, then it is trivially decomposable over $\K$. 

In particular, if $\Delta$ does admit Betti splitting over $\K$, then it is trivially decomposable over $\K$.
\end{pro}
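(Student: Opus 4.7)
The plan is to extract the statement directly from the homology splitting formula specialized at $k=d$, exploiting non-negativity of reduced Betti numbers. The only auxiliary work is handling the degenerate case $\Delta_1\cap\Delta_2=\{\emptyset\}$ separately and observing that the hypothesis $d\geq 2$ makes even this case fall in line with Definition~\ref{def:non-trivially}.

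More precisely, suppose $\Delta=\Delta_1\cup\Delta_2$ is a homology splitting of $\Delta$ over $\K$. First I would dispose of the trivial case: if $\Delta_1\cap\Delta_2=\{\emptyset\}$, then because $d\geq 2$ we have $d-1\geq 1>-1$, so $\widetilde{\beta}_{d-1}(\Delta_1\cap\Delta_2;\K)=\widetilde{\beta}_{d-1}(\{\emptyset\};\K)=0$ by the recollection that the only nonvanishing reduced Betti number of $\{\emptyset\}$ is in degree $-1$; hence $\Delta$ is trivially decomposable. Otherwise $\Delta_1\cap\Delta_2\neq\{\emptyset\}$ and Definition~\ref{def:homspli} gives, specializing at $k=d$,
\begin{equation*}
\widetilde{\beta}_d(\Delta;\K)=\widetilde{\beta}_d(\Delta_1;\K)+\widetilde{\beta}_d(\Delta_2;\K)+\widetilde{\beta}_{d-1}(\Delta_1\cap\Delta_2;\K).
\end{equation*}
The left-hand side vanishes by assumption, and each term on the right is a non-negative integer, so in particular $\widetilde{\beta}_{d-1}(\Delta_1\cap\Delta_2;\K)=0$, which is exactly the condition required by Definition~\ref{def:non-trivially}.

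The final ``In particular'' clause is then immediate from the remark, already established after Definition~\ref{def:homspli}, that any Betti splitting of $\Delta$ is automatically a homology splitting of $\Delta$, so the same conclusion applies.

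There is no real obstacle here: the argument is purely a non-negativity observation applied to the top-degree instance of the homology splitting identity. The only thing to be slightly careful about is the convention on reduced Betti numbers of $\{\emptyset\}$, which is precisely why the hypothesis $d\geq 2$ (ensuring $d-1\geq 0$) is used; without it the trivial intersection case would not automatically yield a trivial decomposition in the sense of Definition~\ref{def:non-trivially}.
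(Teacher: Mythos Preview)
Your argument is correct and is precisely the immediate unpacking of Definition~\ref{def:homspli} that the paper has in mind; the paper gives no separate proof and simply introduces the proposition as an immediate consequence of that definition. One small remark: your justification for the hypothesis $d\geq 2$ in the degenerate case is slightly overstated, since already $d\geq 1$ gives $d-1\geq 0>-1$ and hence $\widetilde{\beta}_{d-1}(\{\emptyset\};\K)=0$; but this does not affect the validity of the proof.
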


Given a simplicial complex $\Delta$ of dimension $d>1$ with $\widetilde{\beta}_d(\Delta;\K)=0$, to check if $\Delta$ is {\em not} trivially decomposable over $\K$ can be tested algorithmically, performing all the possible standard decompositions and checking whether $\widetilde{\bn}_{d-1}$ of the intersection of the two pieces is zero or not. A version of this algorithm for 2-dimensional simplicial complexes has been developed and implemented in Python. The source code of this tool and of the other algorithms described in the paper can be found in \cite{ico}. In this algorithm, we take advantage of the fact that $\Delta_1 \cap \Delta_2$ has dimension $1$, i.e. it is a graph.

Using Proposition \ref{pro:necessary} and this algorithm, we prove that a Betti splitting is not available for several simplicial complexes $\Delta$, considering fields $\K$ for which $\widetilde{\beta}_2(\Delta;\K)=0$ and proving that they are {\em not} trivially decomposable. The considered spaces are depicted in Figure \ref{fig:examples}.

In Table \ref{tab:2}, for each simplicial complex $\Delta$, $n_V$ denotes the number of vertices of the chosen triangulation. In the third column, the number of standard decompositions that have to be checked is showed, while the last column shows the required time (in seconds) to perform the entire computation and to check if $\Delta$ is {\em not} trivially decomposable.
The hardware configuration used for these experiments is an Intel i7 6700K CPU at 4.00Ghz with 64 GB of RAM.

\begin{table}[!htb]
    \centering
    \caption{Statistics of the computation.} %\textcolor{red}{Ha senso inserire anche $\RP^{2n+1}$?}}
        \label{tab:2}
        \vspace{.5em}
    %\resizebox{.85\columnwidth}{!}{
    \begin{tabular}{|c||c|c|c|c|}
      \hline
        \multirow{2}{*}{$\Delta$} & \multirow{2}{*}{$n_V$} & \multirow{2}{*}{$|\mathcal{F}(\Delta)|$}  & Number of & Time \\
                &       &  &  decompositions &  in seconds    \\\hhline{|=#=|=|=|=|}

     $\RP^{2}$
              & 6
              & 10
              & 511
              & 0.07
              \\ \hline

     $K$
              & 8
              & 16
              & 32767
              & 7.47
              \\ \hline

     $D$
              & 8
              & 17
              & 65535
              & 14.11
              \\ \hline

     $M$
              & 9
              & 19
              & 262143
              & 68.44
              \\ \hline
    \end{tabular}
    %}

% altri check: \checkmark   \ding{52}   USATO: \ding{51}
% altre x: \ding{54}   \ding{56}   $\times$  USATO: \ding{55}

\end{table}

\begin{figure}[!htb]
  \centering
	\begin{tabular}{c c}
			\includegraphics[width=0.35\linewidth]{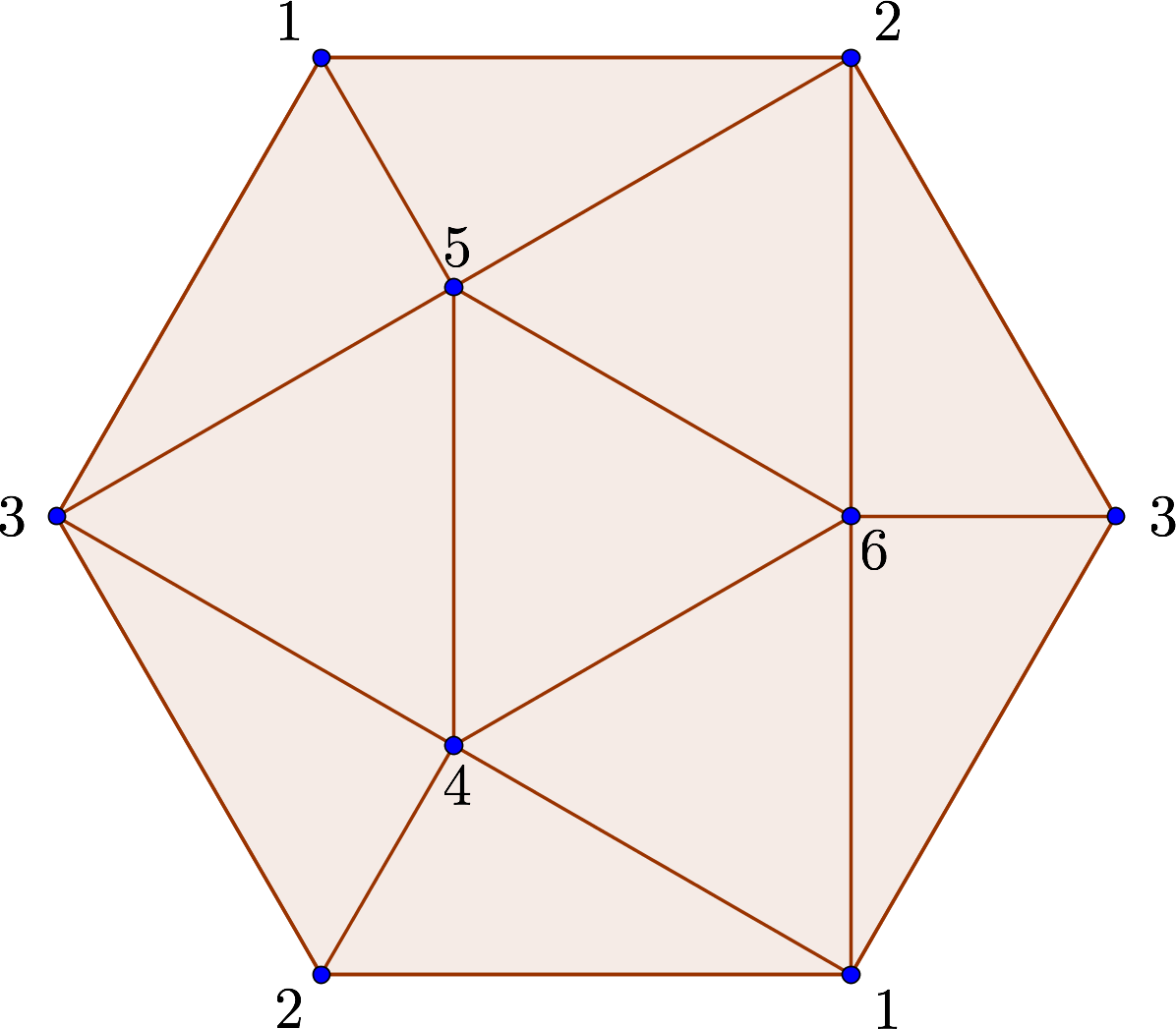} &
			\includegraphics[width=0.35\linewidth]{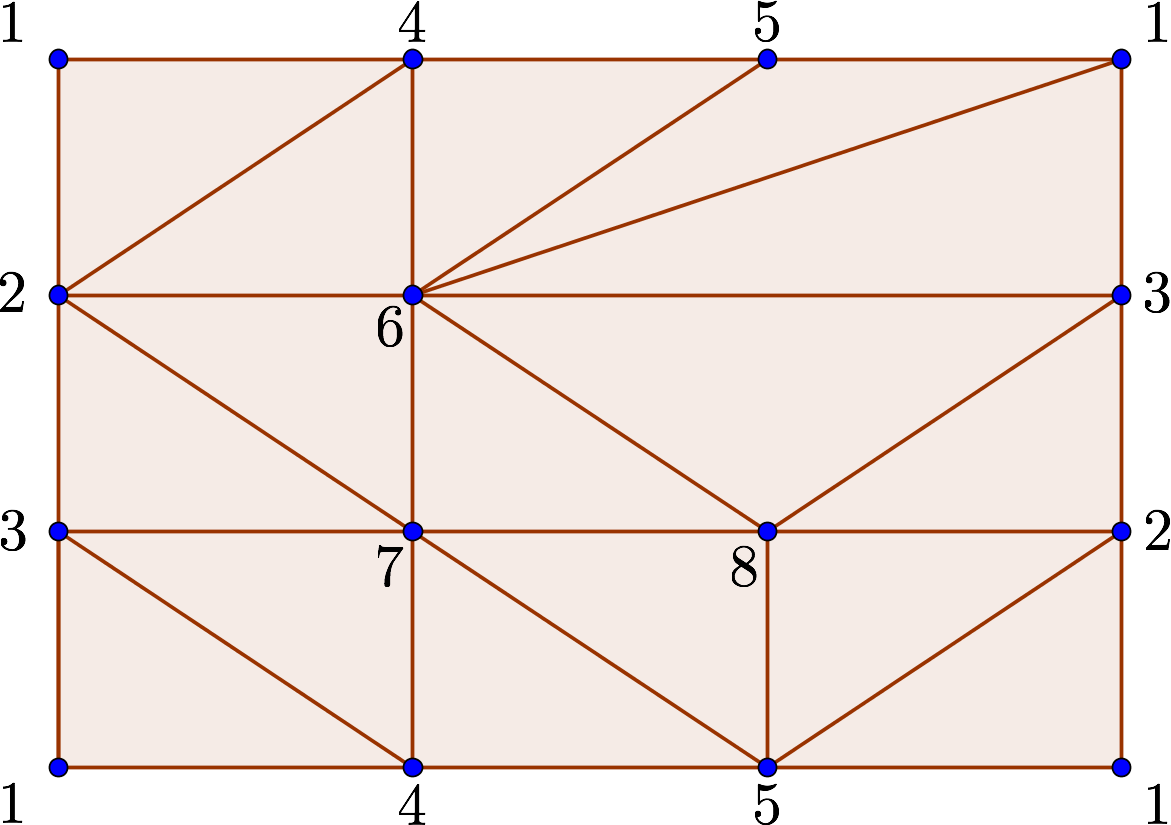}\\
			(a) & (b)\\
      \mbox{} & \mbox{} \\
      % \mbox{} & \mbox{} \\
      \includegraphics[width=0.35\linewidth]{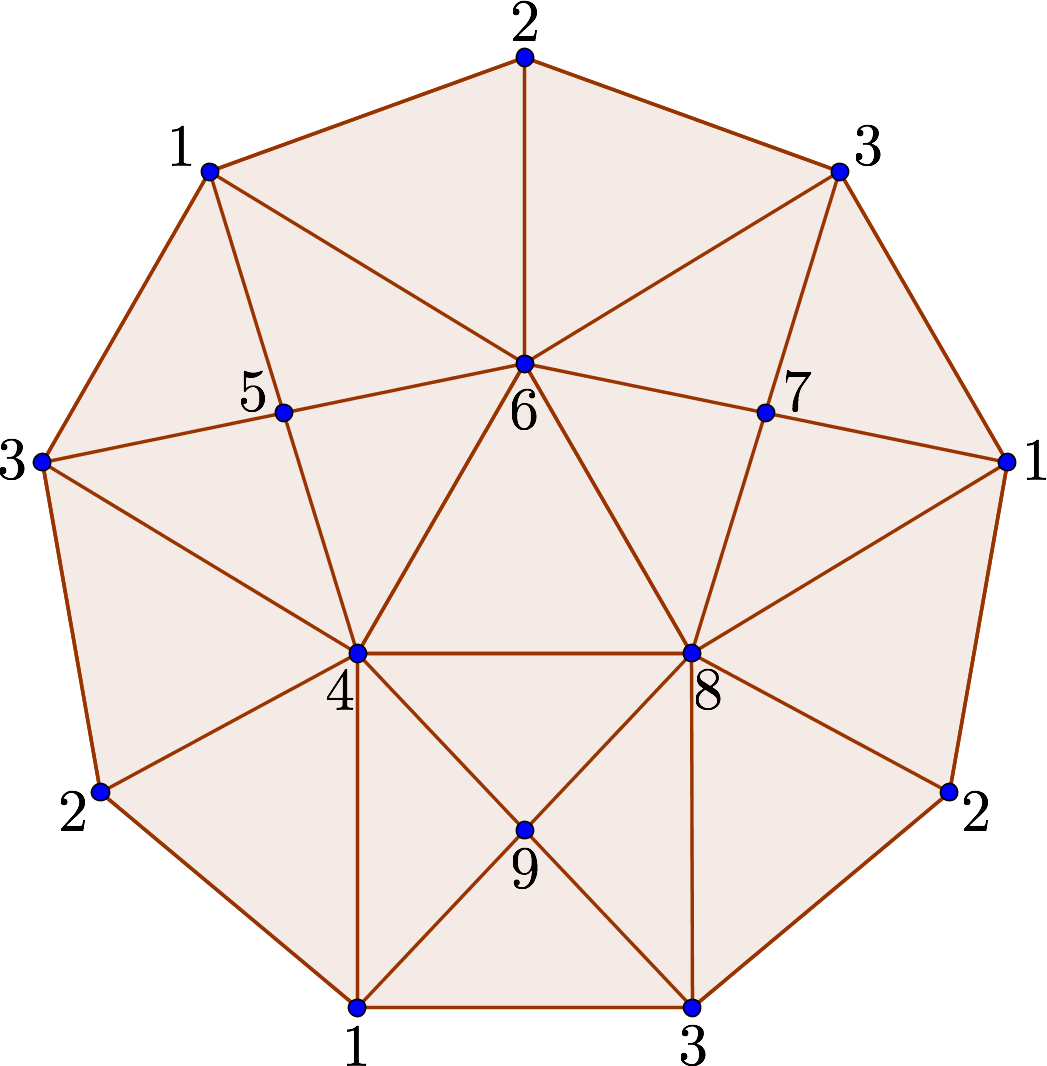} &
      \includegraphics[width=0.35\linewidth]{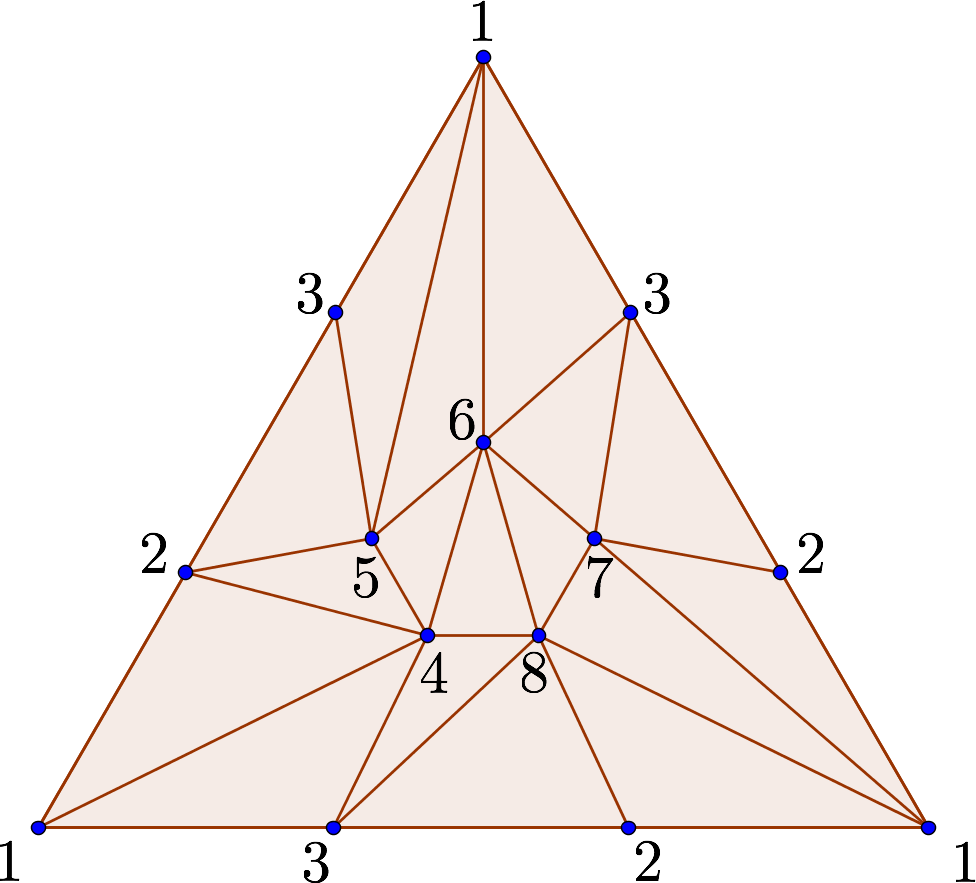}\\
      (c) & (d)\\
	\end{tabular}
  \caption{Simplicial complexes triangulating the real projective plane $\RP^2$ (a), the Klein bottle $K$ (b), the mod 3 Moore space $M$ (c) and the Dunce hat $D$ (d), respectively.}
  \label{fig:examples}
\end{figure}

\begin{example}\label{len}
Consider the triangulation $\Delta$ given in \cite{lutz} of the lens space $L(3,1)$ (for more details on this space see \cite{hatcher}). It is an orientable $3$-manifold. Then, by Theorem \ref{viceversa}, it admits Betti splitting induced by the removal of any of its facets. Its Alexander dual ideal $I_{\Delta}^*$ is the first example in literature of an ideal with characteristic dependent resolution admitting Betti splitting over every field. This answers to Question \cite[Question 4.3]{ur}.
\end{example}

Proposition \ref{pro:necessary} and the structure of the spaces considered, suggests that the pathology of some examples of this section does {\em not} depend on the chosen triangulation. To formalize this statement we introduce the following definition.

\begin{defin}\label{bettiprob}
Let $\Delta$ be a simplicial complex. Denote by $\mathcal{S}_{\Delta}$ and $\mathcal{B}_{\Delta}$ the collection of standard decompositions of $\Delta$ and the collection of these decompositions that are Betti splitting, respectively. We can define the {\em Betti splitting probability} of $\Delta$ over $\K$ as follows: $$P_{Betti}(\Delta;\K):=\frac{|\mathcal{B}_{\Delta}|}{|\mathcal{S}_{\Delta}|}.$$ For a topological space $X$ admitting a triangulation, we can define the {\em best Betti splitting probability} over $\K$: $$P_{Betti}(X;\K):=\sup\{P_{Betti}(\Delta): \Delta \text{ is a triangulation of } X\}.$$
\end{defin}

Analogously we can define the {\em homology splitting probability} $P_{Hom}(\Delta;\K)$ of $\Delta$ over $\K$. Clearly $P_{Betti}(\Delta) \leq P_{Hom}(\Delta).$
%
%The next algorithm computes easily the Betti splitting probability for a simplicial complex $\Delta$ over a field $\K$.

%\begin{algorithm}[!htb]
%  \caption{ $BettiSplittingProbability(\Delta, \K)$ }
%  \label{algorithm4}
%    \begin{algorithmic}
%        \medskip
%        \State {\bf Input:} $\K$, a field
%        \State {\bf Input:} $\Delta$, a simplicial complex of dimension $d$
%        \State Compute all the possible standard decomposition $\Delta_1\cup \Delta_2$ of $\Delta$
%        \State N=0
%        \For {{\bf each} standard decomposition $\Delta_1\cup \Delta_2$ of $\Delta$}
%        \If {$isBettiSplitting(\Delta,\Delta_1,\Delta_2,\K)$}
%        \State {N=N+1}
%        \EndIf
%        \EndFor
%        \State {{\bf return} $P=\frac{N}{2^{|\mathcal{F}(\Delta)|-1}}$}
%    \end{algorithmic}
%\end{algorithm}

In this paper, we proved that $P_{Hom}(\Delta)=P_{Betti}(\Delta)$, if $\Delta$ is the triangulation of a manifold
, restricting to standard decompositions for which the intersection is a manifold, see Theorem \ref{teo:3}.
 
Moreover we proved that $P_{Betti}(X)>0$ if $X$ is orientable, see Theorem \ref{viceversa}. Moreover we showed, for instance, that for the given triangulation of the Klein bottle, $P_{Betti}(K;\K)=P_{Hom}(K;\K)=0$, if $\chara(\K) \neq 2$.

Focusing the attention only on the standard decompositions induced by the removal of a single facet, we define the {\em facet splitting probability} of $\Delta$: $$P_{Facet}(\Delta):=\frac{|\mathcal{B}_{\mathcal{F}(\Delta)}|}{|\mathcal{F}(\Delta)|},$$ where $\mathcal{B}_{\mathcal{F}(\Delta)}$ is the collection of standard decompositions of $\Delta$ induced by the removal of a single facet that are Betti splitting.

\begin{oss}
Let $\Delta$ be a $d$-manifold. By Theorem \ref{viceversa}, we proved that $\Delta$ is orientable if and only if $P_{Facet}(\Delta;\K) \neq 0$ for every field $\K$. In this case, $P_{Facet}(\Delta;\K)=1$. Moreover we have that if $\Delta$ is not orientable, then $P_{Facet}(\Delta;\K) \neq 0$ if and only if $\chara(\K)=2$. Also in this case $P_{Facet}(\Delta;\K)=1$.
\end{oss}

We are now able to state precise problems:

\noindent {\bf Problem 1}: Let $X$ be a non-orientable $d$-manifold without boundary. Is it true that $P_{Betti}(X,\K)=0$, if $\K$ is a field with $\chara(\K) \neq 2$? Is it true that every triangulation of $X$ is {\em not} trivially decomposable?

\noindent {\bf Problem 2}: Let $X$ be the mod 3 Moore space. Is it true that $P_{Betti}(X,\K)=0$, if $\K$ is a field with $\chara(\K) \neq 3$?

\noindent {\bf Problem 3}: Let $X$ be the dunce hat. Is it true that $P_{Betti}(X,\K)=0$, for every field $\K$?

\section*{Acknowledgments}
This work has been partially supported by the US National Science Foundation under grant number IIS-1116747.
The authors wish to thank Leila De Floriani, Emanuela De Negri and Maria Evelina Rossi for their helpful comments and suggestions.

\end{document}